\newtheorem{propo}{Proposition}[section]
\newtheorem{theo}{Theorem}[section]
\newtheorem{remark}{Remark}[section]
\def\R{\mathbb{R}}
\def\S{\mathbb{S}}
\def\x{\mathbf{x}}
\def\c{\mathbf{c}}
\def\z{\mathbf{z}}
\def\r{\mathbf{R}}
\def\xxi{\mathbf{\xi}}
\def\aalpha{\mathbf{\alpha}}
\def\bbeta{\mathbf{\beta}}
\def\X{\mathcal{X}}
\def\K{\mathcal{K}}
\let\origmaketitle\maketitle
\def\maketitle{
	\begingroup
	\def\uppercasenonmath##1{} % this disables uppercasing title
	\let\MakeUppercase\relax % this disables uppercasing authors
	\origmaketitle
	\endgroup
}
\begin{document}

\title[]{\Large A Mathematical Optimization Approach to\\Multisphere Support Vector Data Description}

\author[V. Blanco, I. Espejo, R. P\'aez, \MakeLowercase{and} A.M. Rodr\'iguez-Ch\'ia]{
{\large V\'ictor Blanco$^{\dagger}$, Inmaculada Espejo$^{\ddagger}$, Ra\'ul P\'aez$^{\ddagger}$, and Antonio M. Rodr\'guez-Ch\'ia$^{\ddagger}$}\medskip\\
$^\dagger$Institute of Mathematics (IMAG), Universidad de Granada\\
$^\ddagger$Dpt. of Stats. \& Operations Research, Universidad de C\'adiz\\
\texttt{vblanco@ugr.es}, \texttt{inmaculada.espejo@uca.es}, \texttt{raul.paez@uca.es}, \texttt{antonio.rodriguezchia@uca.es}
}

\maketitle

\begin{abstract}
We present a novel mathematical optimization framework for outlier detection in multimodal datasets, extending Support Vector Data Description approaches. We provide a  primal formulation, in the shape of a Mixed Integer Second Order Cone model,   that constructs Euclidean hyperspheres to identify anomalous observations. Building on this, we develop a dual model that enables the application of the kernel trick, thus allowing for the detection of outliers within complex, non-linear data structures. An extensive computational study demonstrates the effectiveness of our exact method, showing clear advantages over existing heuristic techniques in terms of accuracy and robustness.
\end{abstract}

%%Research highlights
% \begin{highlights}
% \item 
% \end{highlights}

\keywords{
Mathematical Optimization; Outlier Detection; Machine Learning; Support Vector Data Description; Multiple Hyperspheres; Kernels
}

\section{Introduction}

Support Vector Data Description (SVDD) models were introduced for anomaly detection and one-class classification, as a methodology to recognize patterns presumed as regular, and identifying them from any other object violating this regularity condition. This tool is particularly interesting for datasets where identifying outliers and recognizing data boundaries is required. In contrast to other types of classification tools, the labels of the training observations are not provided, and the goal is to distinguish regular data from anomalous instances by means of geometric boundaries. One of the advantages of SVDD is that it allows the construction of flexible description boundaries without
the need to make assumptions regarding data distribution. The main idea behind (linear) SVDD is to construct a hypersphere in the feature space (its center and its radius) that minimize a trade-off between the radius and a distance from the points \textit{outside} the hypersphere to its boundary. SVDD was introduced by ~\cite{tax2004support} as a tool for one-class classification based on solving a convex optimization problem. The strong duality of the problem was further analyzed by \cite{Wang2011360} implying an equivalent dual reformulation of the problem  that allows for the use of kernels to construct more sophisticated classification boundaries. 

Several improvements have been performed to SVDD since its introduction in order to generate accurate boundaries that are able to capture the information of complex datasets. This is the case of \textit{Density weighted SVDD}~\citep{Cha2014} where the training instances are weighted in the classical SVDD to incorporate the information to the model about the neighbor observations, or the two-phases approach proposed in \citep{liu2013svdd} where these weights are designed to capture the uncertainty in the feature values of the training dataset. \cite{arashloo2022} generalizes the standard Euclidean norm SVDD to a general $\ell_p$-norm penalty function for the errors accounting irregular points (not in the norm defining the hypersphere but in the aggregation of these errors). Furthermore, the analogies with Support Vector Machines (SVM), introduced by Cortes and Vapnik \citep{cortes1995support} are evident, and some of the extensions developed for SVMs have been adapted to SVDD. This is the case of semisupervised SVDD~\citep{mygdalis2018semi} where some of the instances in the dataset are labeled while others are not labeled, Multimodal SVDD~\citep{sohrab2021multimodal} where there are different groups of instances in the dataset with different feature spaces, manifold regularization SVDD for noise filtering~\citep{wu2023manifold}, oversampling approaches for SVDD to address imbalance issues in between-class and within-class imbalances~\citep{tao2022svdd}, as well as different loss functions to aggregate the errors induced by the outlier observations~\citep{zheng2023robust}.

Nevertheless, little attention has been given in the literature to outlier detection methods for multimodal data. That is, the dataset contains instances from different distributions that are to be detected, as well as their outliers. Although the SVDD and their extensions have been proven to be useful to determine the hidden distribution of the given data in case the data comes from a single spatial distribution, these methods fail to detect different distributions. The simultaneous  identification of patterns and anomalous observations is  crucial to understand the distribution of the dataset in multiple fields. For instance, in medical diagnosis is usual to have multimodal distribution of imaging biomarkers (as MRI scans grouped by disease subtypes), and where  outliers might indicate early-stage diseases or rare pathologies that do not conform to known profiles. In Finances, when analyzing the usage of credit card data, multimodal spending patterns are usually present, and  outliers could suggest fraud or unusual behavior. In air transportation or public transit systems, multimodal patterns in movement or load are common, and outliers can flag system inefficiencies, delays, or unauthorized paths.

A few works have analyzed the problem of extending SVDD tools to multiple distributions. The natural extension requires solving a non convex optimization problem, difficulting the analysis and its resolutions. \cite{turkoz2020generalized} provide a multi-class SVDD method by assuming that the assignments of the training data to the different classes is known (a supervised approach). \cite{gornitz2017support} and \cite{xiao2009multi} propose a new methodology to combine SVDD and clustering approaches to derive a machine learning tool to detect outliers observations in this type of datasets. However, all the previously proposed approaches are heuristic since it consists of separately detect distributions (clustering) and outliers (SVDD). 

In this paper, we analyze some   mathematical optimization insights of this problem that we call the MultiSphere Support Vector Data Description (MSVDD, for short) problem. We propose a primal mixed integer nonlinear optimization model for the problem that can be reformulated as a Mixed Integer Second Order Cone Optimization (MISOCO) problem. Then, we derive a dual reformulation that allows for the use of the kernel trick. We derive some mathematical properties of the problem and analyze its ability to detect outlier observations in multimodal instances. 

The use of mathematical optimization tools, particularly discrete optimization models, in machine learning is well established. This synergy has proven successful in deriving interpretable classification rules, as evidenced by applications in support vector machine models~\citep{baldomero2020ejor,baldomero2021robust,blanco2022mathematical,maldonado2014feature1,maldonado2014feature2}, optimal classification trees~\citep{bertsimas2017optimal,blanco2022robust}, and clustering of complex data~\citep{benati2017clustering}, among many others. Here, we take a step forward in this synergy by proposing a novel optimization-based tool for one-class classification, which is particularly well suited for identifying structure in multimodal datasets.

The geometrical problem solved in the (classical) SVDD is closely related with the minimum enclosing ball problem~\citep{sylvester1857question} that consists of finding the hypersphere with the smallest radius enclosing a set of given  points in a Euclidean space. This problem has multiple applications in computer vision~\citep{larsson2016parallel} and machine learning~\citep{wang2007bayes}, and several efficient solution approaches have been proposed~\citep[see e.g.][]{pan2006efficient}. Furthermore, multiple extensions for the problem have been proposed, as the use of generalized shapes instead of hyperspheres~\citep{blanco2021covering} or the minimum $k$-enclosing ball problem that requires only $k$ points to be covered by the hypersphere~\citep{cavaleiro2022branch}. Among all the versions of this problem, the so-called minimum enclosing ball problem with soft constraints~\citep[see e.g.][]{hu2012scaling} coincides with the SVDD. Thus, an alternative interpretation of the SVDD is possible in case the observations represent certain set of customers, and the center of the hypersphere represents a service for those users. Assuming that the agent locating the service pays for the installation of the facility based on its coverage radius, and that for those users outside this radius the  agent pays for a service to transport the users to the coverage area, the problem results in the facility location version of the SVDD. Then, SVDD has also implications in the Facility Location community, which is one of the most active fields in Operations Research~\citep[see the recent book][]{LS2019}. Specifically, one of the natural extensions in this type of applications is to \textit{locate} more than one centers to give service to the users, or equivalently, determine a given number of hyperspheres to cover the demand of the customers, which is 
closely related to what we propose in this extension of the SVDD. The extensions from single-facility to multi-facility problems are not only useful in Facility Location but also have significant implications for Machine Learning, as demonstrated in the context of Fitting Linear Models~\citep{blanco2021multisource} or in multiclass classification~\citep{blanco2020optimal}, where different hidden trends in the data can be captured by simultaneously clustering and fitting/classifying a set of instances.

The main contributions of this paper are:
\begin{itemize}
    \item We provide a \textit{primal} mathematical optimization model to compute a given number of hyperspheres, which define a decision rule for identifying outlier observations in a dataset.
    \item We derive an equivalent \textit{dual} formulation that enables the application of the \textit{kernel trick}, allowing the construction of \textit{kernelized} hyperspheres to detect outliers in complex datasets.
    \item We run an extensive battery of computational experiments to validate our proposal, using both synthetic and real-world instances. We first evaluate the performance of our approach by varying the model's parameters. Then, we compare it with the heuristic location-allocation method proposed by \cite{gornitz2017support}.
\end{itemize}

The rest of the paper is organized as follows. In Section \ref{sec:prelim}, we introduce the problem and set the notation that will be use through this paper. Section \ref{sec:msvdd} is devoted to detail the mathematical optimization formulation for the primal MSVDD, analyzing some of its properties. Then, we derive a dual reformulation that allows the application of the kernel trick. In Section \ref{sec:ce}, we report the results of our computational experiments. Finally, in Section \ref{sec:conc}, we draw some conclusions on the paper and give some possible further research on the topic.

\section{Preliminaries}\label{sec:prelim}

This section introduces 
the problem under analysis and the notation used in the rest of the paper.

We are given a finite training dataset $\X = \{\x_1, \ldots, \x_n\} \subset \mathbb{R}^d$ in a $d$-dimensional feature space. We denote by $N=\{1, \ldots, n\}$ the index set for these observations. 

The SVDD problem, introduced by \cite{tax2004support}, consists of constructing a hypersphere, $\S$, in $\R^d$ such that, ideally, the \textit{regular} observations are contained within $\S$, whereas the outlier observations lie outside $\S$. Note that an hypersphere is fully characterized by its center, $\c\in \R^d$, and its radius $r\geq 0$ as:
$$
\S(\c,r) = \{\x \in \R^d: \|\x-\c\|_2 \leq r\},
$$
where $\|\cdot\|_2$ is the Euclidean norm in $\R^d$.

Given the training dataset, $\X$, and a hypersphere $\S(\c,r)$, the goodness measure that is considered in SVDD is:
$$
r^2 +  \sum_{i\in N} C \max\{0, \|\x_i - \c\|_2^2-r^2\}
$$
where $C\geq 0$ 
is a unit distance penalization parameter for the outlier observations. 
 In this expression, $r^2$ can be interpreted as a fixed cost associated with the hypersphere, reflecting its size and that is independent of the number of observations included in the hypersphere. The second term represents the cumulative hinge-loss penalties incurred by 
the observations with respect to the hypersphere $\S(\c,r)$. Indeed, if an observation 
$\mathbf{x}_i$ lies within the hypersphere, its contribution to this term is zero. However, if $\mathbf{x}_i$ is located outside the hypersphere, the penalty is proportional to the squared Euclidean distance from the point to the hypersphere's boundary.

This objective aims to be minimized by balancing two components: the radius of the hypersphere $\S$ and the penalization for points lying outside it. Consequently, the parameter $C$ serves as a trade-off factor between minimizing the radius of $\S$ and controlling the error associated with outlier observations.

With the above comments, the SVDD problem can be formulated as the following mathematical optimization problem:
\begin{alignat}{4}
    \min \;\;& R + C \sum_{i\in N} \xi_i&&\\
    \mbox{s.t. } & \|\x_i - \c\|_2^2 \leq R + \xi_i,\quad &&\forall i \in N,\\
    & R \geq 0,&&\\
    & \c \in \R^d,&&\\
    & \xi_i \geq 0, &&\forall i \in N,
\end{alignat}
where $R$ represents the squared radius of the hypersphere ($R=r^2$), and $\c$ its center. The variables $\xi_1, \ldots, \xi_n$ provide a measure of the error terms for the outlier observations. As mentioned in \citep{Wang2011360}, the parameter $C$ has also an impact in the number of outlier observations in the training dataset. Specifically, for a given value of $C$, the number of observations outside the hypersphere will be always greater or equal than $1/C$. This is because, otherwise, the objective value can be reduced by decreasing the radius, even if this leads to larger errors.

Since the SVDD formulation above is weakly convex, multiple optimal solutions may exist. Thus, several efficient algorithms have been proposed for solving it for large datasets~\citep[see e.g.][]{jiang2019fast,peng2012efficient}.

Once the problem is solved,  with optimal solutions $R^*$ (radius) and $\c^*$ (center), the decision rule ${\rm D}: \R^d \rightarrow \{\text{Regular}, \text{Outlier}\}$, to detect outliers is the following:
\[
{\rm D}(\x) = \begin{cases}
    \text{Regular,} & \mbox{if $\|\x-\c^*\|_2^2 \leq R^*$},\\
    \text{Outlier,} & \mbox{otherwise}.\\
\end{cases}
\]

In this paper, we analyze the multisphere version of this problem, which can be stated as follows. Given an integer value $p\geq 1$, 
let $P:=\{1, \ldots, p\}$, with $p<n$, the index set for the hyperspheres. The goal of the Multisphere Support Vector Data Description (MSVDD) problem is to construct $p$ hyperspheres, $\S_1, \ldots, \S_p$, in $\R^d$ such that the \textit{regular} observations are contained within at least one of the hyperspheres, whereas the outliers observations are in $\R^d \backslash \bigcup_{j\in P} \S_j$.

Hence, 
given the training dataset, $\X$, and a set of hyperspheres $\S_1, \ldots, \S_p$, the goodness measure we use is defined as the sum of the squared radii of 
each hypersphere, plus the errors corresponding to the outliers observations.

Unlike the single-hypersphere case, where error is measured directly with respect to that 
single sphere, in MSVDD, the error induced by an outlier observation must be calculated with respect to the hypersphere which it is assigned to.

\section{A mathematical optimization model for MSVDD}\label{sec:msvdd}

In this section, we propose a mathematical optimization model for the  MSVDD and a Mixed Integer Second Order Cone reformulation for it. We analyze some of its properties and construct a dual reformulation that allows the application of the kernel trick to the method.

In order to formulate the MSVDD problem as a suitable mathematical optimization problem, we consider the following set of binary variables:
\[
z_{ij} = \begin{cases}
    1, & \mbox{if $\x_i$ is allocated to hypersphere $\S_j$,}\\
    0, & \mbox{otherwise,}
\end{cases}  \quad \forall i \in N, j \in P.
\]

These variables will allow us to model the hypersphere with respect to which an outlier's error is measured.

With these variables, alongside the continuous variables representing the centers ($\c_j\in \R^d$, for $j\in P$), the squared radii ($R_j\geq 0$, for $j\in P$), the error measures ($\xi_i\geq 0$, for $i\in N$), and the parameter $C$, we propose the following optimization model for the MSVDD:

\begin{alignat}{4}
\rm (F_{MSVDD})\quad\min \;\; & \sum_{j\in P} R_j + \sum_{i \in N}  C\xi_i \label{msvdd:obj}\\
\mbox{s.t. } & \|\x_i - \c_j\|_2^2  \leq R_j + \xi_i + \Delta_i(1-z_{ij}),\quad && \forall i \in N, j \in P, \label{msvdd:2}\\
& \sum_{j\in P} z_{ij}=1, &&\forall i \in N,\label{msvdd:1}\\
& \xi_i \geq 0, &&\forall i \in N, \label{msvdd:35}\\
& \c_j \in \mathbb{R}^d, &&\forall j \in P, \label{ctr:7_0}\\
& R_j \geq 0, &&\forall j \in P,\label{ctr:8_0}\\
& z_{ij} \in \{0,1\}, &&\forall i \in N, j \in P, \label{ctr:9_0}
\end{alignat}
\noindent
where $\Delta_i$ is a big enough constant, we can assume that  $\Delta_i := \max_{i'\in N}\|\x_i - \x_{i'}\|_2^2$, for any $i \in N$. The objective function \eqref{msvdd:obj} accounts for the sum of the square radii of the hyperspheres and the penalty term for each outlier defined as $C$ times the distance to its assigned hypersphere's boundaries. 

Constraints \eqref{msvdd:2} are a linearization, using McCormick envelopes~\citep{mccormick1976computability}, of:
 $$ \|\x_i - \c_j\|_2^2 z_{ij} \leq R_j + \xi_i,\quad \forall i \in N, j \in P.$$
Then, these constraints adequately define the error $\xi_i$, for $i\in N$: in case $\x_i$ is allocated to hypersphere $\S_j$ ($z_{ij}=1$), i.e., the constraint is activated, indicating  that $\xi_i = \max\{0, \|\x_i-\c_j\|_2^2 - R_j\}$. 
 Constraint \eqref{msvdd:1} 
assures that each observation is allocated to exactly one hypersphere.

Note that  constraints \eqref{msvdd:2} can be rewritten as a Second Order Cone constraints~\citep{alizadeh2003second}, and then, the problem above results in a MISOCO problem.

\begin{remark}
    Given an optimal solution of  \rm ($F_{MSVDD}$), $(\bar \r, \bar \c, \bar \xxi, \bar \z)$, we construct the outlier detection rule ${\rm D}: \R^d \rightarrow \{\text{Regular}, \text{Outlier}\}$ as follows:
\[
{\rm D}(\x) = \begin{cases}
    \text{\rm Regular,} & \mbox{if $\exists j \in P: \|\x-\bar \c_j\|_2^2 \leq \bar R_j$},\\
    \text{\rm Outlier,} & \mbox{otherwise.}\\
\end{cases}
\]

\end{remark}
Some properties of the solutions to (F$_\text{MSVDD}$) are analyzed below.

\begin{propo}
    Let $(\bar \r, \bar \c, \bar \xxi, \bar \z)$ a feasible solution to the  \rm ($F_{MSVDD}$) with objective value $f^*$. Then, for all permutations $\sigma$ of $P$, the tuple  $(\tilde \r, \tilde \c, \bar \xxi, \tilde \z)$, where
    \begin{alignat}{4}
    \tilde R_j  \: &=& \:\bar R_{\sigma(j)}, \quad&\forall j \in P,\\
    \tilde \c_{j}\: & =& \:\bar c_{\sigma(j)},\quad  &\forall j \in P,\\
    \tilde z_{ij}\: &=& \:\bar z_{i \sigma(j)}, \quad &\forall i \in N, j \in P,
    \end{alignat}
    has also an objective value of $f^*$.
\end{propo}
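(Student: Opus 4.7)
The claim is essentially a statement about the symmetry of the formulation $(F_{MSVDD})$ under relabeling of the $p$ hyperspheres. The plan is therefore to verify, constraint by constraint, that applying an arbitrary permutation $\sigma$ of $P$ to the radii, centers and assignment variables preserves both feasibility and the objective value, which boils down to a change-of-variables argument in the indices of summation.

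First, I would check feasibility of $(\tilde \r, \tilde \c, \bar \xxi, \tilde \z)$. For the coupling constraint \eqref{msvdd:2}, I would substitute the definitions: for any $i\in N$ and $j\in P$,
\begin{equation*}
\|\x_i-\tilde \c_j\|_2^2 \;=\; \|\x_i-\bar \c_{\sigma(j)}\|_2^2 \;\leq\; \bar R_{\sigma(j)} + \bar \xi_i + \Delta_i(1-\bar z_{i\sigma(j)}) \;=\; \tilde R_j + \bar \xi_i + \Delta_i(1-\tilde z_{ij}),
\end{equation*}
where the inequality is precisely the original constraint \eqref{msvdd:2} evaluated at index $\sigma(j)$, which is valid because $(\bar \r, \bar \c, \bar \xxi, \bar \z)$ is feasible. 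For the assignment constraint \eqref{msvdd:1}, the bijectivity of $\sigma$ yields $\sum_{j\in P} \tilde z_{ij} = \sum_{j\in P}\bar z_{i\sigma(j)} = \sum_{j'\in P}\bar z_{ij'} = 1$. The remaining constraints \eqref{msvdd:35}--\eqref{ctr:9_0} are preserved trivially: $\bar \xi_i\geq 0$ is untouched, $\tilde \c_j\in\R^d$ and $\tilde R_j\geq 0$ follow from the same properties of $\bar \c_{\sigma(j)}$ and $\bar R_{\sigma(j)}$, and $\tilde z_{ij}\in\{0,1\}$ since $\bar z_{i\sigma(j)}\in\{0,1\}$.

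Next, I would verify that the objective value is invariant under $\sigma$. Using again that $\sigma$ is a bijection on $P$,
\begin{equation*}
\sum_{j\in P} \tilde R_j + C\sum_{i\in N} \bar \xi_i \;=\; \sum_{j\in P} \bar R_{\sigma(j)} + C\sum_{i\in N} \bar \xi_i \;=\; \sum_{j'\in P} \bar R_{j'} + C\sum_{i\in N} \bar \xi_i \;=\; f^*.
\end{equation*}

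There is no real obstacle here; the whole argument is a clean bookkeeping exercise that formalizes the observation that the labels of the hyperspheres play no role in the formulation. The only mild subtlety worth pointing out is that the $\bar \xxi$ vector does not need to be permuted, because the error variables are indexed by observations, not by hyperspheres, and the big-$M$ constants $\Delta_i$ likewise depend only on $i$. If desired, one could state the result as a corollary that the feasible region and the set of optimal solutions of $(F_{MSVDD})$ are invariant under the natural action of the symmetric group on $P$, which motivates the use of symmetry-breaking constraints when solving the MISOCO model.
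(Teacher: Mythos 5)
Your proof is correct and follows the only natural route: the paper itself dismisses the statement with ``The result is straightforward,'' and your constraint-by-constraint verification (feasibility via evaluating \eqref{msvdd:2} at index $\sigma(j)$, the bijectivity argument for \eqref{msvdd:1}, and the re-indexing of the objective sum) is exactly the bookkeeping the authors left implicit. Your remark that $\bar\xxi$ and the constants $\Delta_i$ need not be permuted, being indexed by observations rather than hyperspheres, is a worthwhile clarification but does not constitute a different approach.
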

\begin{proof}
    The result is straightforward.
\end{proof}

Based on the above result, one has that multiple (at least $p!$) optimal solutions of the problem appear. In order to accelerate the solution procedure of the problem, one can incorporate the following symmetry breaking constraints to the problem to avoid this multiplicity of solutions:
$$
R_{j} \leq R_{j+1}, \forall j \in P\backslash\{p\}.
$$

\begin{propo}\label{prop:allocation}
Let $(\bar \r, \bar{\mathbf{c}}, \bar \xxi, \bar{\mathbf{z}})$ be an optimal solution of \text{(F$_\text{MSVDD}$)}. Then,
$$
\sum_{i \in N} \bar z_{ij} \geq 1, \quad \forall j \in P.
$$
\end{propo}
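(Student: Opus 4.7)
The plan is to proceed by contradiction: assume some hypersphere $j_0 \in P$ satisfies $\sum_{i \in N} \bar z_{ij_0} = 0$ and build a feasible solution with strictly smaller objective than $(\bar\r, \bar\c, \bar\xxi, \bar\z)$, which would contradict optimality.

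I would first show that, under the assumption, we must have $\bar R_{j_0} = 0$. Replace $\bar R_{j_0}$ by $0$ and $\bar \c_{j_0}$ by an arbitrary training point $\x_{i^*}$, leaving every other variable intact. The only constraints in~\eqref{msvdd:2} involving hypersphere $j_0$ are those with $z_{ij_0}=0$; for these, $\|\x_i - \x_{i^*}\|_2^2 \leq 0 + \bar\xi_i + \Delta_i$ holds automatically from the definition $\Delta_i = \max_{i'\in N}\|\x_i - \x_{i'}\|_2^2 \geq \|\x_i - \x_{i^*}\|_2^2$. Feasibility is preserved and the objective strictly decreases by $\bar R_{j_0}>0$, a contradiction; hence $\bar R_{j_0} = 0$. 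Next, since $p<n$ and~\eqref{msvdd:1} distributes $n$ observations among $p$ hyperspheres, pigeonhole gives some $j_1 \neq j_0$ with at least two observations allocated to it. I would pick the extremal one $i^* \in \arg\max_{i:\bar z_{ij_1}=1}\|\x_i - \bar\c_{j_1}\|_2^2$ and reallocate it to $j_0$, setting $\tilde z_{i^* j_0}=1$, $\tilde z_{i^* j_1}=0$, $\tilde\c_{j_0}=\x_{i^*}$, $\tilde R_{j_0}=0$, and $\tilde\xi_{i^*}=0$. The $\Delta_i$ bound handles the new constraints at pairs $(i,j_0)$ as before, while the constraint at $(i^*,j_1)$ becomes $\|\x_{i^*}-\bar\c_{j_1}\|_2^2 \leq \bar R_{j_1} + \Delta_{i^*}$; this follows from the convex-hull location of $\bar\c_{j_1}$ among the observations allocated to $\S_{j_1}$ (an optimality consequence of the per-hypersphere subproblem obtained after freezing $\bar\z$) combined with Jensen's inequality.

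The main obstacle is certifying a \emph{strict} drop in the objective in this second step. I would split on the value of $\bar\xi_{i^*}$. If $\bar\xi_{i^*}>0$, the outlier penalty $C\bar\xi_{i^*}$ is wiped out, yielding an immediate strict decrease. If $\bar\xi_{i^*}=0$, i.e., $\x_{i^*}$ already lies inside $\S_{j_1}$, the gain must come from shrinking $\bar R_{j_1}$ down to $\max_{i\neq i^*:\bar z_{ij_1}=1}\|\x_i - \bar\c_{j_1}\|_2^2$; for this I would invoke the local optimality of $\bar R_{j_1}$ as minimizer of the univariate piecewise-linear function $R_{j_1}+C\sum_{i:\bar z_{ij_1}=1}\max\{0,\|\x_i-\bar\c_{j_1}\|_2^2-R_{j_1}\}$, whose slope changes occur precisely at the squared distances to the observations in $\S_{j_1}$. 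The delicate corner case is a tie at the extremal distance; this would be handled by iterating the reassignment across all tied observations, a procedure that must terminate because $p<n$.
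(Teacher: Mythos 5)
Your first branch (some point with $\bar\xi_{i^*}>0$) is essentially the paper's argument, but your second branch contains a genuine gap: when all errors vanish, the strict decrease you are counting on may simply not exist, and in fact the ``every optimal solution'' reading of the statement fails there, so no contradiction argument can close this case. Concretely, take $p=2$, $C$ large, and the four vertices of a square, $(\pm 1,0)$ and $(0,\pm 1)$. Assigning all four points to one sphere (center at the origin, $\bar R_{j_1}=1$) and leaving the second sphere empty with radius $0$ is feasible and optimal: the $3$--$1$ split and the adjacent $2$--$2$ split also cost exactly $1$, and nothing costs less. Here the extremal distance in the loaded sphere is attained by all four points --- precisely your tie case --- and reassigning one tied point to the empty sphere leaves $\bar R_{j_1}$ unchanged, producing an equally good solution but never a strictly better one. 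Your proposed fix, iterating the reassignment across all tied observations, cannot rescue this: there is only one empty sphere available, and a radius-zero sphere centered at one tied point cannot absorb the other tied points for free; covering several of them forces $R_{j_0}>0$ with no guaranteed net gain. The iteration terminates, but in an alternative optimum, not in a contradiction.

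This is exactly where the paper's proof proceeds differently. After establishing $\bar R_{j_0}=0$, it splits on whether there exists \emph{any} $i_0\in N$ with $\bar\xi_{i_0}>0$ (an arbitrary point with positive error, not the extremal point of a pigeonhole-selected sphere): if so, centering the empty sphere at $\x_{i_0}$ yields the strict decrease $C\bar\xi_{i_0}$, as in your first branch; if instead $\bar\xi_i=0$ for all $i\in N$, the paper does \emph{not} claim a contradiction --- it only constructs an equivalent solution with the same objective value in which every sphere receives a point, i.e., in the degenerate case the property is secured only up to replacing the given optimum by an equivalent one. You should restructure your case split on $\max_{i\in N}\bar\xi_i>0$ versus $\bar\xxi\equiv 0$, and in the latter case abandon the strict-decrease goal in favor of this equivalence statement. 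Two side remarks: your observation that $\c_{j_0}$ must be relocated to a data point when setting $R_{j_0}=0$, so that the big-$M$ constraints \eqref{msvdd:2} for the pairs with $z_{ij_0}=0$ remain feasible via $\Delta_i$, is correct and is a detail the paper glosses over; and your convex-hull/Jensen step for the constraint at $(i^*,j_1)$ is only needed in the positive-error branch --- in the zero-error branch it is immediate, since the previously active constraint already gives $\|\x_{i^*}-\bar\c_{j_1}\|_2^2\le \bar R_{j_1}\le \bar R_{j_1}+\Delta_{i^*}$.
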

\begin{proof}
Let $(\bar \r, \bar{\mathbf{c}}, \bar \xxi, \bar{\mathbf{z}})$ be an optimal solution of \textnormal{(F$_\text{MSVDD}$)} such that $\sum_{i \in N} \bar z_{ij_0} = 0$ for some $j_0 \in P$. In this case, we have $\bar R_{j_0} = 0$, since otherwise the objective value could be reduced by setting $R_{j_0} = 0$, as no points are assigned to this sphere.

In what follows, we prove that it is possible to obtain a solution with a better objective value where the (degenerated) hypersphere $\S_{j_0}$ with $\bar R_{j_0}=0$ has a point assigned to it.   

Let us consider the solution $(\hat \r, \hat \c,\hat \xxi, \hat\z )$, with:
    \begin{align*}
         \hat \c_j &= \bar \c_j, \forall j \in P\backslash\{j_{0}\}\\
        \hat R_j &= \bar R_j, \forall j \in P\backslash\{j_{0}\}.
    \end{align*}
    \noindent and $f^*$ the objective value in the optimal solution.
    
   We analyze first the case in which there exists $i_0 \in N$ with $\bar\xi_{i_{0}}>0$.  To construct the hypersphere $\S_{j_0}$, we set $\hat\c_{j_0} = \x_{i_0}$ and $\hat R_{j_0}=0$. Then, we define the errors $\hat \xxi_i$ as follows:
    $$
    \hat \xi_i = \min_{j\in P} \max \{0, \|\x_i - \hat \c_j\|-\hat R_j\}, \forall i \in N.
    $$
    Note that $\hat \xxi_{i_0}=0$. Then, we set
    $$
    \hat z_{ij} = \begin{cases}
        \bar z_{ij}, & \mbox{if $i\neq i_0$,}\\
        1, & \mbox{if $i=i_0$, $j=j_0$,}\\
        0, & \mbox{if $i=i_0$, $j\neq j_0$.}
    \end{cases}
    $$
Note that the objective value of this solutions is:
\begin{eqnarray*}
\sum_{j \in P} \hat R_j + \sum_{i\in N} C \hat \xi_i &\stackrel{\hat\xi_{i_0}=0, \hat{R}_{j_0}=0}{=}& \Big(\sum_{j \in P\backslash\{j_{0}\}} \hat R_j + \sum_{i\in N \backslash \{i_{0}\}} C \hat \xi_i\Big) \\
&=&f^* - C \bar \xi_{i_{0}} \\
&<& f^*.
\end{eqnarray*}
Thus, contradicting the optimality of the solution.

Secondly, in case $\bar\xi_i=0$ for all $i \in N$, since $p<n$, one can construct an equivalent solution by setting $\hat\c_{j_0}=\x_i$ for an arbitrary $i\in N$, with the same optimal value, but enforcing the allocation of at least one point to each hypersphere.
\end{proof}

\begin{propo}\label{propo:C}
Let $(\bar \r, \bar \c, \bar \xxi, \bar \z)$ be an optimal solution of a relaxed version of {\rm ($F_{MSVDD}$)} without constraints \eqref{ctr:8_0}, and $j \in P$.
If $\displaystyle \sum_{i\in N}  \bar{z}_{ij}\geq \frac{1}{C}$, then $\bar{R}_j\geq 0$. 

Furthermore, if $\displaystyle  \sum_{i\in N} \bar{z}_{ij} = \frac{1}{C}$, then there exists an optimal solution with $\bar R_j=0$.
\end{propo}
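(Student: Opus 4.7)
The plan is to isolate the dependence of the objective on $R_j$ alone. Let $I_j = \{i \in N : \bar z_{ij} = 1\}$, $n_j = |I_j| = \sum_{i \in N} \bar z_{ij}$, and $d_i^2 = \|\x_i - \bar\c_j\|_2^2$ for $i \in I_j$. The assignment constraint~\eqref{msvdd:1} ties each $\xi_i$ to a single sphere, and the big-$M$ terms in~\eqref{msvdd:2} are vacuous whenever $z_{ij}=0$. Hence, holding $\bar\c_j$ and $\bar\z$ fixed and eliminating $\xi_i = \max\{0, d_i^2 - R_j\}$ for $i \in I_j$, the $R_j$-dependent part of the objective reduces to the one-dimensional convex piecewise-linear function
$$
g(R_j) \;=\; R_j \;+\; C \sum_{i \in I_j} \max\{0,\, d_i^2 - R_j\}.
$$

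Next I would compute the subgradient of $g$. Away from the breakpoints $\{d_i^2\}_{i \in I_j}$, one has $g'(R_j) = 1 - C\,|\{i \in I_j : d_i^2 > R_j\}|$. Since each $d_i^2$ is non-negative, every $R_j \leq 0$ lies at or below every breakpoint, so the slope on $(-\infty, 0]$ is exactly $1 - C n_j$.

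From here the two conclusions fall out. If $n_j \geq 1/C$, this slope is non-positive, so $g$ is non-increasing on $(-\infty, 0]$, and consequently $g(0) \leq g(\bar R_j)$ whenever $\bar R_j < 0$; setting $\bar R_j := 0$ and updating $\bar\xi_i := d_i^2$ for $i \in I_j$ yields an optimal solution with $\bar R_j \geq 0$, giving the first claim. For the furthermore, when $n_j = 1/C$ the same slope is exactly zero on $(-\infty, \min_{i\in I_j} d_i^2]$, so $g$ is constant there and the same substitution produces an optimal solution with $\bar R_j = 0$. The only thing left to verify is that the updated $\bar\xxi$ remains feasible in the full formulation, which is immediate from the big-$M$ structure of~\eqref{msvdd:2}; so the argument reduces essentially to the one-dimensional convex analysis above, and there is no serious obstacle to overcome.
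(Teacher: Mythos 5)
Your argument is correct and is essentially the paper's own proof in convex-analysis packaging: the paper fixes $\bar\c,\bar\z$, moves $R_j$ from $\bar R_j<0$ up to $0$ while shifting each assigned error to $\bar\xi_i+\bar R_j$, and computes the objective change $\bar R_j\bigl(C\sum_{i\in N}\bar z_{ij}-1\bigr)$, which is precisely your statement that $g$ has slope $1-Cn_j\le 0$ on $(-\infty,0]$, with slope exactly $0$ (hence an equally good solution with $R_j=0$) when $n_j=1/C$. The one step you assert rather than verify --- that the big-$M$ constraints \eqref{msvdd:2} for spheres $j'\neq j$ remain slack after decreasing the $\xi_i$, which is not automatic in the relaxed problem where other radii may be negative --- is passed over in the paper's proof as well (it only rechecks the constraints for $j_0$ itself), so your proposal matches the paper's argument step for step.
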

\begin{proof}
%First, we prove (1) and (2).
We assume that $(\bar \r, \bar \c, \bar \xxi, \bar \z)$ is an optimal solution of {\rm ($F_{MSVDD}$)}. Let us assume that there exists $j_0\in P$, such that $\sum_{i\in N}  \bar z_{ij_0}\geq \frac{1}{C}$ with $\bar R_{j_0}<0$.

Let us construct an alternative feasible solution, $(\hat{\r}, \hat{\c}, \hat{\xxi}, \hat{\z})$, as follows:
$$
\hat{\c}:=\bar \c,  \hat{\z}:=\bar\z
$$
and:
\begin{eqnarray}\label{sol}
\hat{\xi}_i = \begin{cases}
    \bar \xi_i + \bar R_{j_0}, & \mbox{if } \bar{z}_{ij_0}=1,\\
    \bar \xi_i , & \mbox{otherwise,}
\end{cases} \forall i \in N, && \hspace*{-0.45cm}\hat{R}_j = \begin{cases}
    \bar R_j, & \mbox{if } j\neq j_0,\\
    0, & \mbox{otherwise}, 
\end{cases}\forall j \in P.
\end{eqnarray}

Observe that  $\xi_i + \bar R_{j_0}\ge 0$ when $\bar{z}_{ij_0}=1$, since it is the right hand side of  \eqref{msvdd:2} for $j=j_0$.

Let us prove that $(\hat{\r}, \hat{\c}, \hat{\xxi}, \hat{\z})$  satisfies \eqref{msvdd:2}. Since the alternative solution that we constructed only differs from the previous one in the indices $i\in N$ with $\hat z_{ij_{0}} = \bar z_{ij_{0}}=1$, it is suffice to prove that it holds for these indices in $N$:
\begin{eqnarray*}
\|\x_i - \hat{\c}_{j_0}\|_2^2 =\|\x_i - \bar \c_{j_0}\|_2^2   \leq (\bar R_{j_0} + \bar\xi_i) \stackrel{\eqref{sol}}{=} \hat{R}_{j_0} + \hat{\xi}_i  \stackrel{\hat z_{ij_{0}}=1}{=} \hat{R}_{j_0} + \hat{\xi}_i+ \Delta_i(1-\hat{z}_{ij_0}).
\end{eqnarray*}
Thus  $(\hat{\r}, \hat{\c}, \hat{\xxi}, \hat{\z})$ is feasible.

Let $\bar f := \sum_{j\in P} \bar {R}_j + C\sum_{i \in N} \bar {\xi}_i$ be optimal value of the problem. The objective value of the new feasible solution is:
\begin{eqnarray*}
\hat{f} := \sum_{j\in P} \hat{R}_j + C\sum_{i \in N} \hat{\xi}_i&=&
 \sum_{j\in P \atop j\neq j_0} \bar{R}_j + C\sum_{i \in N} \bar{\xi}_i+ C \sum_{i \in N\atop \bar{z}_{ij_0}=1} \bar R_{j_0} = \bar f - \bar R_{j_{0}} + C\sum_{i \in N\atop \bar{z}_{ij_0}=1} \bar R_{j_0}.
\end{eqnarray*}
Since we are asuming that $\sum_{i\in N} \bar z_{i{j_0}} \geq \frac{1}{C}$ and $\bar R_{j_{0}}<0$, we have that $\displaystyle C\sum_{i \in N\atop \bar{z}_{ij_0}=1} \bar R_{j_0} = C \bar R_{j_{0}}\sum_{i \in N} \bar{z}_{ij_0} \leq \bar R_{j_{0}}$. Therefore, $\hat{f} \leq \bar f$, i.e., $(\hat{\r}, \hat{\c}, \hat{\xxi}, \hat{\z})$ is also optimal with  $\hat{R}_{j_{0}}\geq 0$.

Note that if $\sum_{i\in N} \bar z_{i{j_0}} = \frac{1}{C}$, then $\hat{f} = \bar f$, and in that case, another optimal solution with $\hat{R}_{j_{0}} = 0$ exists.
\end{proof}

\subsection{Dual Reformulation}

It is well known that for complex datasets, the geometric clustering obtained from a solution to the primal formulation may fail to capture the true structure of the training data. To address this limitation, particularly in high-dimensional or nonlinearly separable cases, dual formulations are often employed. A key advantage of the dual approach is that it enables the use of kernel functions, allowing the method to operate implicitly in a (possibly much) higher-dimensional feature space without explicitly computing the transformation. Specifically, by assuming the existence of a mapping $\Phi: \R^d \rightarrow \R^{d'}$, we can perform outlier detection or clustering in the transformed space $\mathbb{R}^{d'}$, where linear separators correspond to more complex boundaries in the original input space. The  kernel trick allows us to express inner products $\langle \Phi(x), \Phi(x') \rangle$ through a kernel function, which is particularly advantageous for constructing more flexible, data-adaptive decision boundaries than Euclidean hyperspheres in the original space.

By Proposition \ref{propo:C}, we conclude that a way to ensure non-negative radii appear in the solution is to allocate at least $\frac{1}{C}$ points to each hypersphere. Therefore, from now on, we will impose that the constraint $\sum_{i\in N}  {z}_{ij} \geq \frac{1}{C}$ holds for each $j \in P$, which implies that the non-negativity of the radii need not be explicitly enforced through additional constraints. 

Hence, with all the comments above, in  what follows we analyze the following (primal) version of the MSVDD problem in the transformed feature space using mapping $\Phi$:
%{\rm ($F_{MSVDD}$)}:
\begin{alignat}{4}
\hspace*{-0.5cm} {\rm (F^{\Phi}_{MSVDD})}\,\min \;\; & \sum_{j\in P} R_j +  \sum_{i \in N} C\xi_i \nonumber\\
\mbox{s.t. } & \eqref{msvdd:1}, \eqref{msvdd:35},\eqref{ctr:9_0} \nonumber\\
& 
\|\Phi(\x_i) - \c_j\|_2^2  \leq R_j + \xi_i + \Delta_i(1-z_{ij}),
\quad 
&& \forall i \in N, j \in P, \label{msvdd:p2}\\
& {{\sum_{i\in N} C z_{ij}  \geq 1,}} && \forall j \in P,\label{msvdd:3}\\
& \c_j \in \mathbb{R}^{d'}, &&\forall j \in P.\label{msvdd:36}
\end{alignat}

For solving the above MSVDD problem in the image space via $\Phi$, 
the next result provides a reformulation of the problem to apply the so-called kernel trick, that does not require the explicit expression of the transformation $\Phi$.

Let us assume that we are given $\K: \R^d \times \R^d \rightarrow \R$, a kernel function verifying Mercers' condition~\citep{mercer} that represent $\K(\x, \mathbf{y}) = \Phi(\x)^T \Phi(\mathbf{y})$ for all $\x, \mathbf{y} \in \R^d$. 
Let us denote by $\mathcal{K}$-MSVDD the dual problem of MSVDD using the kernel function $\mathcal{K}$.

\begin{theo} Formulations 
{($F^{\Phi}_{MSVDD}$)}   and  {($F_{MSVDD}^{\K}$)} (below) are equivalent in the sense that both formulations have the same optimal value and an optimal solution from one formulation can be built from an optimal solution of
the other, where 
\begin{alignat}{4}
{\rm (F^{\K}_{MSVDD})} \: \min &\ \sum_{j\in P} R_j + \sum_{i \in N} C\xi_i \nonumber\\
\mbox{s.t. } & \eqref{msvdd:1}, \eqref{msvdd:35},\eqref{ctr:9_0} \nonumber \\
& \K(\x_i,\x_i) - 2 \sum_{k\in N} \alpha_{kj} \K(\x_i,\x_k) + \sum_{k\in N} \sum_{l\in N} && \alpha_{kj}\alpha_{lj} \K(\x_k,\x_l) \:\;
\nonumber\\ &
\leq R_j + \xi_i + \Delta_i(1-z_{ij}), &&\forall i \in N, j \in P, \label{dual_ctr}\\
&0 \leq \alpha_{ij} \leq Cz_{ij}, &&\forall i \in N, j \in P,\label{alphaC}\\
&\sum_{i\in N} \alpha_{ij}= 1, &&\forall j \in P, \label{sumalpha}
%& R_j \geq 0, \forall j \in P,\\
%& \xi_i \geq 0, &&\forall i \in N.
\end{alignat}
where $\Delta_i$ is a large enough constant, that 
 can be set as:
 $$
 \displaystyle\Delta_i := \K(\x_i,\x_i)+2 \sum_{k\in N} \pi_{ik}\K(\x_i,\x_k) + \sum_{k\in N} \sum_{l\in N} (C-\pi_{kl})^2 \K(\x_k,\x_l),
 $$
 for all $i \in N$, and where:
 $$
 \pi_{ik} = \begin{cases}
     C,  & \text{\rm if } \K(\x_i,\x_k) < 0, \\
     0,  &  \rm{otherwise.}
     %\rm if } \K(\x_i,\x_k) \geq 0,\\
 \end{cases}
 $$
\end{theo}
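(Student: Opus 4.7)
The plan is to fix the binary assignment vector $\z$ and dualize only the remaining continuous subproblem, which decouples sphere by sphere into a classical SVDD in the image space. Since Mercer's theorem allows us to replace every inner product $\Phi(\x_k)^T\Phi(\x_l)$ by $\K(\x_k,\x_l)$, the Lagrange multipliers $\alpha_{ij}$ introduced for the ball constraints will be exactly the variables that appear in $(F^{\K}_{MSVDD})$.

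\textbf{Step 1 (decoupling in $\z$).} Fix any $\z$ satisfying \eqref{msvdd:1} and \eqref{msvdd:3}, and set $N_j := \{i\in N : z_{ij}=1\}$. For $i\notin N_j$, the term $\Delta_i(1-z_{ij})$ in \eqref{msvdd:p2} renders the inequality slack, so the continuous subproblem in $(\c, R, \xxi)$ separates across $j\in P$: for each $j$, one solves the image-space SVDD consisting in minimising $R_j + C\sum_{i\in N_j}\xi_i$ subject to $\|\Phi(\x_i)-\c_j\|_2^2 \le R_j+\xi_i$ and $\xi_i\ge 0$ for every $i\in N_j$. This is a convex second-order cone program, and \eqref{msvdd:3} forces $|N_j|\ge 1/C$, which together with Proposition~\ref{propo:C} provides a strictly feasible primal point and hence strong duality.

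\textbf{Step 2 (Lagrangian and kernelization).} Introduce multipliers $\alpha_{ij}\ge 0$ for the ball constraint and $\mu_{ij}\ge 0$ for $\xi_i\ge 0$. Stationarity yields $\sum_{i\in N_j}\alpha_{ij}=1$ from $\partial_{R_j}L=0$; $\alpha_{ij}+\mu_{ij}=C$, so $\alpha_{ij}\in[0,C]$, from $\partial_{\xi_i}L=0$; and the representer identity $\c_j=\sum_{i\in N_j}\alpha_{ij}\Phi(\x_i)$ from $\partial_{\c_j}L=0$ combined with $\sum_i\alpha_{ij}=1$. Extending $\alpha_{ij}:=0$ for $i\notin N_j$ unifies these into exactly the constraints \eqref{alphaC} and \eqref{sumalpha}. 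Substituting $\c_j=\sum_k\alpha_{kj}\Phi(\x_k)$ into $\|\Phi(\x_i)-\c_j\|_2^2$, expanding, and replacing each $\Phi(\x_k)^T\Phi(\x_l)$ by $\K(\x_k,\x_l)$ reproduces exactly the left-hand side of \eqref{dual_ctr}. This yields a value-preserving correspondence between feasible (hence optimal) solutions of the two formulations: from a primal optimum one reads off the $\alpha_{ij}$ as the dual multipliers of the subproblems; conversely, from a feasible $(\aalpha, \r, \xxi, \z)$ of $(F^{\K}_{MSVDD})$, defining $\c_j:=\sum_k\alpha_{kj}\Phi(\x_k)$ recovers a feasible primal solution with identical objective, since the kernel expansion is reversible.

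\textbf{Step 3 (validity of $\Delta_i$).} It remains to justify the closed form for $\Delta_i$, namely that $\|\Phi(\x_i)-\c_j\|_2^2\le\Delta_i$ for every $\c_j$ realizable from $\alpha_{kj}\in[0,C]$. Expanding the squared norm in kernel form, one bounds the linear term $-2\sum_k\alpha_{kj}\K(\x_i,\x_k)$ by setting $\alpha_{kj}=C$ whenever the sign of $\K(\x_i,\x_k)$ makes that choice worst case (this is exactly what $\pi_{ik}\in\{0,C\}$ encodes) and $\alpha_{kj}=0$ otherwise, while the bilinear term $\sum_{k,l}\alpha_{kj}\alpha_{lj}\K(\x_k,\x_l)$ admits a crude pointwise worst-case bound of $(C-\pi_{kl})^2\K(\x_k,\x_l)$. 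Aggregating these contributions produces the $\Delta_i$ stated in the theorem.

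\textbf{Main obstacle.} Once the $\z$-decoupling is recognised, the dualization in Step~2 is a straightforward adaptation of the classical SVDD derivation. The delicate part is Step~3: carefully tracking the signs of possibly negative kernel entries so as to exhibit a uniform big-$M$ that is simultaneously a valid upper bound and loose enough to match the piecewise closed form declared in the statement.
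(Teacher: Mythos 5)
Your proposal follows essentially the same route as the paper's proof: fix the binary assignment $\z$, dualize the resulting convex continuous subproblem, read off the stationarity conditions $\sum_{i\in N}\alpha_{ij}=1$, $0\le\alpha_{ij}\le Cz_{ij}$ and the representer identity $\c_j=\sum_{k\in N}\alpha_{kj}\Phi(\x_k)$, kernelize via Mercer's condition, and establish the two-way value-preserving correspondence; your sphere-by-sphere decoupling is just an explicit rendering of the separability the paper exploits implicitly, and where the paper invokes Proposition~\ref{prop:allocation} to ensure the stationarity system is well defined, you use constraint~\eqref{msvdd:3} to the same effect. Two small remarks. First, your appeal to Proposition~\ref{propo:C} for strict feasibility is misattributed: that proposition concerns nonnegativity of the radii (justifying the removal of $R_j\ge 0$), while Slater's condition for the subproblem holds trivially by taking $R_j$ large and $\xi_i>0$, which is all the strong-duality claim needs. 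Second, your Step~3 actually goes beyond the paper, whose proof never verifies the declared $\Delta_i$; note, though, that the worst case of the linear term over $\alpha_{kj}\in[0,C]$ is $-2\sum_{k\in N}\pi_{ik}\K(\x_i,\x_k)\ge 0$, the \emph{negative} of the second summand as printed in the statement, so the construction you describe yields a valid big-$M$ that matches the stated formula only up to this sign (apparently a typo in the statement itself), whereas your pointwise bound $(C-\pi_{kl})^2\K(\x_k,\x_l)$ for the bilinear term is exactly the coordinatewise maximum, as you claim.
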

%\begin{comment}
\begin{proof}

Let $(\r^*, \c^*, \xxi^*,  \z^*)$ be an optimal solution of ($F^{\Phi}_{MSVDD}$), and let ($F^{\Phi}_{MSVDD}(\z^*)$) the formulation ($F^{\Phi}_{MSVDD}$) where the $\z$-variables have been fixed to the $\z^*$-values, and constraints \eqref{msvdd:1},
\eqref{ctr:9_0} and \eqref{msvdd:3} have been removed since they only depend on the these variables.

Thus, ($F^{\Phi}_{MSVDD}(\z^*)$) can be reformulated as
\begin{alignat}{4}
\min \;\; & \sum_{j\in P} R_j +  C \sum_{i \in N}\xi_i \nonumber\\
\mbox{s.t. } &  \eqref{msvdd:35}, \eqref{msvdd:36}, \nonumber \\
&
{\| \Phi(x_i) - \c_j\|_2^2 z^*_{ij} \leq (R_j + \xi_i)z^*_{ij},}\quad && \forall i \in N, j \in P. \label{msvdd*:22}
\end{alignat}

This problem is a convex non-linear problem and its Lagrangian function is given by:
\begin{align}
 \mathcal{L}_{\z^*}(\r,\c,\xxi;\aalpha,\bbeta)&=\sum_{j\in P} R_j + C \sum_{i \in N} \xi_i \label{lagrangian} \\
&+{\sum_{i\in N}\sum_{j\in P} \alpha_{ij}z^*_{ij} (\|\Phi(\x_i) - \c_j\|^2 - R_j - \xi_i)}  - \sum_{i\in N} \beta_{i}\xi_{i},\nonumber
\end{align}
where {$\alpha_{ij} \geq 0$}, for $i\in N$, $j\in P$, and $\beta_i \geq 0$ for $i\in N$ are Lagrange multipliers associated with constraints \eqref{msvdd*:22} and 
\eqref{msvdd:35}, respectively.

Thus, the Lagrange dual is
 \begin{align*}
    \displaystyle\max_{\bf{\aalpha,\bbeta}\geq 0}\left(\inf_{\c,\r,\xxi} \mathcal{L}_{\z^*}(\r,\c,\xxi;\aalpha,\bbeta)\right)
 \end{align*}
The optimality conditions, setting partial derivatives with respect the $\r$, $\c$, and $\xxi$-variables to zero, result in the following relations:
\begin{align}
 & \frac{\partial \mathcal{L}_{\z^*}}{\partial R_j}(\r,\c,\xxi;\aalpha,\bbeta) = 0 \Rightarrow 1 - \sum_{i\in N} \alpha_{ij} z^*_{ij} = 0\Rightarrow \sum_{i\in N} \alpha_{ij} z^*_{ij}= 1, \label{der:rj_0}\\
  & 
  \frac{\partial \mathcal{L}_{\z^*}}{\partial c_j}(\r,\c,\xxi;\aalpha,\bbeta)\hspace*{-0.1cm} = \hspace*{-0.1cm} 0 \hspace*{-0.1cm} \Rightarrow \hspace*{-0.25cm} {\sum_{i\in N} \alpha_{ij}z^*_{ij}(c_j - \Phi(x_i)) = 0} %\Rightarrow \sum_{i\in N} \alpha_{ij} c_j
 %= \sum_{i\in N} \alpha_{ij} \Phi(x_i) 
 \hspace*{-0.1cm} \Rightarrow \hspace*{-0.1cm} \c_j \hspace*{-0.1cm} = \hspace*{-0.25cm} \displaystyle\frac{\displaystyle\sum_{i\in N} \alpha_{ij} z^*_{ij} \Phi(x_i)}{\displaystyle\sum_{i\in N} \alpha_{ij}z^*_{ij}}, \label{centers_0}\\
 & \frac{\partial \mathcal{L}_{\z^*}}{\partial \xi_i}(\r,\c,\xxi;\aalpha,\bbeta) = 0 \Rightarrow \hspace*{-0.1cm} {{C} - \hspace*{-0.15cm} \sum_{j\in P} \alpha_{ij} z^*_{ij} - \beta_i = 0} \Rightarrow \hspace*{-0.25cm}{{\sum_{j\in P} \alpha_{ij} z^*_{ij}={C} - \beta_i,} }\label{der_chi}
 \end{align}
 Note that by Proposition \ref{prop:allocation}, the system defined by the above equations is well-defined.
Since $\alpha_{ij} \geq 0$, for $i\in N$, $j\in P$, and $\beta_i \geq 0$ for $i\in N$, then by \eqref{der_chi}, we obtain that  $\sum_{j\in P} \alpha_{ij} z^*_{ij} \leq C $. 
{Moreover, since $\sum_{j \in P} z^*_{ij}=1$ with $z^*_{ij}\in\{0,1\}$, and $\alpha_{ij}$ always appears multiplied by $z^*_{ij}$, then we can assume that\begin{eqnarray}
0 \leq  \alpha_{ij} \leq {C} z^*_{ij}. \label{alpha_1}\end{eqnarray}}
Since, by Proposition \ref{prop:allocation}, $\sum_{i\in N} z^*_{ij}\geq 1$, $\forall j\in P$,   \eqref{der:rj_0} and \eqref{centers_0}  can be rewritten as:
\begin{alignat}{4}
&\sum_{i\in N} \alpha_{ij}=1,\quad&&\forall j \in P, \label{oc:3}\\
&\c_j  = \sum_{i\in N} \alpha_{ij} \Phi(\x_i), \quad&&\forall j \in P,
\label{c_phi}
\end{alignat}

Then, a dual formulation to ($F^{\Phi}_{MSVDD}(z^*)$) is given by:
\begin{alignat*}{4}
\max \;\; &\sum_{j \in P} \Big( \sum_{i \in N} \alpha_{ij} \Phi(\x_i)^T\Phi(\x_i) - \sum_{k\in N}\sum_{\ell \in N} \alpha_{kj}\alpha_{\ell j}\Phi(\x_k)^T\Phi(\x_{\ell}) \Big)\\
\mbox{s.t. } & \eqref{alpha_1}, \eqref{oc:3}.
\end{alignat*}
Let $\aalpha^*$ be an optimal solution of this problem. We prove that $(\r^*,\xxi^*, \aalpha^*, \z^*)$, where $\c^*$ is defined by \eqref{c_phi}, is a feasible solution for   $(F^{\K}_{MSVDD})$. Let us prove that \eqref{dual_ctr} is satisfied. Since 
$\c^*_j  = \sum_{i\in N} \alpha^*_{ij} \Phi(\x_i)$, from 
\eqref{msvdd:p2}  we obtain 
\begin{align}\label{eq:17}
\nonumber R_j + \xi_i + \Delta_i(1-z_{ij})&\geq \|\Phi(\x_i) - \c_j\|^2 = \Phi(\x_i)^T\Phi(\x_i) - 2\sum_{k\in N} \alpha_{kj}\Phi(\x_i)^T\Phi(\x_k) \\&+ \sum_{k\in N} \alpha_{kj}\Phi(\x_k)\sum_{l\in N} \alpha_{lj}\Phi(\x_l).
\end{align}

Given a kernel function $\K: \R^d \times \R^d \rightarrow \R$ verifying  the Mercer's conditions~\citep{mercer} with $\K(\x, \x')= \Phi(\x)^t \Phi(\x')$, the expression of \eqref{eq:17} reads:
\begin{align*}
\|\Phi(\x_i) - \c_j\|^2 = \K(x_i,x_i) - 2 \sum_{k\in N} \alpha_{kj} \K(x_i,x_k) + \sum_{k\in N} \sum_{l\in N} \alpha_{kj}\alpha_{lj} \K(x_k,x_l),
\end{align*}
for all $i\in N$, and \eqref{dual_ctr} is satisfied. Moreover, $(\r^*,\xxi^*, \aalpha^*, \z^*)$ has the same objective value as
the optimal value of $(F^{\Phi}_{MSVDD})$. 

Conversely, considering $(\r^*,\xxi^*, \aalpha^*, \z^*)$ an optimal solution of $(F^{\K}_{MSVDD})$,  we prove that
 $(\r^*, \c^*, \xxi^*,  \z^*)$, where $\c^*$ is defined by \eqref{c_phi}, is a feasible solution of ($F^{\Phi}_{MSVDD}$) with the same objective value. Constraints \eqref{msvdd:p2} hold and in addition, \eqref{msvdd:3} are verified since $\sum_{i\in N} C z^*_{ij} \geq \sum_{i\in N} \alpha^*_{ij} = 1$, for all $j\in P$. 
\end{proof}
%\end{comment}

\begin{remark}
    Observe that, given an optimal solution of ($F^{\K}_{MSVDD}$), $(\bar \r,\bar \xxi, \bar\aalpha, \bar \z)$ the decision rule  is based on checking whether an observation $\x$ verifies $\|\Phi(\x)-\bar \c_j\|^2 \leq \bar R_j$, where $\bar \c_j  = \sum_{i\in N} \bar \alpha_{ij} \Phi(\x_i)$, for some $j\in P$. Since the $\Phi$ function may not be known, this check can be done by means of the kernel function $\mathcal{K}$ by identifying the squared Euclidean norm above with the following expression:
\begin{align*}
\|\Phi(\x)-\bar c_j\|_2^2 &= \|\Phi(\x)- \sum_{i\in N} \bar \alpha_{ij} \Phi(\x_i)\|_2^2 \\
&= \K(\x,\x) - 2 \sum_{i\in N} \bar \alpha_{ij} \K(\x,\x_i) + \sum_{k\in N} \sum_{i\in N} \bar \alpha_{kj}\bar \alpha_{ij} \K(\x_k,\x_i)
\end{align*}
Thus, the decision rule induced by the dual problem is:
$$
{\rm D}(\x) = \begin{cases}
    \text{\rm Regular,} & \text{\rm if } \exists j \in P: \K(\x,\x) - 2 \displaystyle\sum_{i\in N} \bar \alpha_{ij} \K(\x,\x_i)\\
    & \mbox{\qquad $+ \displaystyle\sum_{k\in N}\displaystyle\sum_{i\in N}\bar\alpha_{kj}\bar \alpha_{ij} \K(\x_k,\x_i) \leq\bar R_j$},\\
    \text{\rm Outlier,} & \text{\rm otherwise.}\\
\end{cases}
$$
The expression above can be checked using exclusively the values of the dual variables, $\bar \aalpha$, the radii, $\bar \r$, and the training sample through the kernel function.
\end{remark}

The dual formulation $(F^{\K}_{MSVDD})$ extends the classical dual weighed SVDD model~\citep{tax2004support}, as stated in the following result:

\begin{propo}
The formulation $(F^{\K}_{MSVDD})$ for the case of just one sphere
 is equivalent to the classical dual formulation of SVDD.
\end{propo}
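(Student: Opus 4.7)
The plan is to specialize $(F^{\K}_{MSVDD})$ to $p=1$ and show that its optimal value and optimal $\alpha$-solutions coincide with those of the classical kernelized SVDD dual. When $|P|=1$, constraint \eqref{msvdd:1} forces $z_{i1}=1$ for every $i\in N$, which makes the big-$M$ terms $\Delta_i(1-z_{ij})$ vanish from \eqref{dual_ctr}. Writing $\alpha_i:=\alpha_{i1}$, constraint \eqref{alphaC} collapses to $0\le\alpha_i\le C$, constraint \eqref{sumalpha} becomes $\sum_{i\in N}\alpha_i=1$, and constraint \eqref{msvdd:3} reduces to $Cn\ge 1$, which is automatic under the customary assumption $C\ge 1/n$. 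The resulting formulation minimizes $R + C\sum_i \xi_i$ over $(R,\xxi,\aalpha)$ subject to $\K(\x_i,\x_i)-2\sum_k \alpha_k \K(\x_i,\x_k)+\sum_{k,l}\alpha_k\alpha_l\K(\x_k,\x_l)\le R+\xi_i$, $\xi_i\ge 0$, together with the simplex-box constraints on $\aalpha$.

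Next I would invoke the equivalence established in the theorem just proved. Specialized to $p=1$, it identifies the optimal value of the simplified $(F^{\K}_{MSVDD})$ with that of $(F^{\Phi}_{MSVDD})$ at $p=1$, which is literally the classical SVDD primal in the feature space induced by $\Phi$. It then remains to connect this primal with the classical kernelized SVDD dual, namely the quadratic program obtained by maximizing $\sum_i\alpha_i\K(\x_i,\x_i)-\sum_{k,l}\alpha_k\alpha_l\K(\x_k,\x_l)$ subject to $\sum_i\alpha_i=1$ and $0\le\alpha_i\le C$. This last connection is the standard strong-duality result for SVDD (\cite{tax2004support,Wang2011360}); for completeness I would rederive it as an immediate consequence of the Lagrangian argument already carried out in the theorem, since for $p=1$ the stationarity equations \eqref{der:rj_0}--\eqref{der_chi} reduce to the classical SVDD KKT conditions, and substituting \eqref{c_phi} into the Lagrangian \eqref{lagrangian} produces exactly the classical concave quadratic dual objective in $\aalpha$.

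Combining the two equivalences yields both the equality of optimal values and the bijection between optimal $\aalpha$-vectors: any optimal $\aalpha^*$ of the classical SVDD dual, together with the $R^*$ and $\xxi^*$ obtained by minimizing the convex one-dimensional function $R\mapsto R+C\sum_i \max\{0, D_i(\aalpha^*)-R\}$ (with $D_i$ denoting the kernel expression in \eqref{dual_ctr}), is optimal for the simplified $(F^{\K}_{MSVDD})$, and conversely an optimal $\aalpha^*$ of $(F^{\K}_{MSVDD})$ at $p=1$ is optimal for the classical dual.

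The main obstacle is essentially bookkeeping about what ``equivalent'' means here: the simplified $(F^{\K}_{MSVDD})$ retains $R$ and $\xxi$ as explicit variables alongside a quadratically constrained formulation, whereas the classical SVDD dual is a purely quadratic program in $\aalpha$ with linear constraints. The equivalence must therefore be asserted at the level of optimal values and optimal $\aalpha$-vectors, rather than as syntactic identity; once this is spelled out carefully, the argument is a direct specialization of the preceding theorem combined with the well-known SVDD strong duality.
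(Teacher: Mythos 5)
Your route is correct in substance but genuinely different from the paper's. The paper works directly on the one-sphere formulation ${\rm ({F_1^{\K}}_{MSVDD})}$: in one direction it multiplies each constraint \eqref{unajdual_ctr} by $\alpha_i$ and sums, using $\sum_{i\in N}\alpha_i=1$ and $\alpha_i\le C$ to show that every feasible solution dominates the classical dual objective evaluated at its own $\aalpha$; in the other direction it invokes classical SVDD strong duality \citep{tax2004support,Wang2011360} to lift a dual-optimal $\hat{\aalpha}^*$ into a feasible point of ${\rm ({F_1^{\K}}_{MSVDD})}$ with the same value, observing that \eqref{unajdual_ctr} is exactly the primal constraint $\|\Phi(\x_i)-\hat{\c}^*\|^2\le \hat{R}^*+\hat{\xi}_i$ once $\hat{\c}^*=\sum_{i\in N}\hat{\alpha}^*_i\Phi(\x_i)$. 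You instead specialize the preceding theorem to $p=1$ (correctly noting that $z_{i1}=1$ kills the big-$M$ terms and that the cardinality condition degenerates to $Cn\ge 1$), reduce to the classical primal in the feature space, and then apply the same known strong duality. Both proofs rest on the identical external ingredient; yours is more modular, the paper's more self-contained. Your detour through the primal even makes one step cleaner: the paper's aggregation trick by itself only yields a weak-duality inequality at the same $\aalpha$, and it is the identification of \eqref{unajdual_ctr} with the primal ball constraint --- precisely the content of your theorem invocation --- that pins down $\min {\rm ({F_1^{\K}}_{MSVDD})}$ from below by the dual optimum.

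One claim in your final step is too strong, however: the asserted bijection between optimal $\aalpha$-vectors fails in the direction ``every optimal $\aalpha^*$ of ${\rm ({F_1^{\K}}_{MSVDD})}$ is optimal for the classical dual.'' Since the left-hand side of \eqref{unajdual_ctr} equals $\|\Phi(\x_i)-\c(\aalpha)\|^2$ with $\c(\aalpha)=\sum_{k\in N}\alpha_k\Phi(\x_k)$, the one-sphere problem sees $\aalpha$ only through the induced center, so \emph{every} simplex-box representation of the optimal center is optimal for it; the dual objective additionally carries the linear term $\sum_{i\in N}\alpha_i\K(\x_i,\x_i)$, which separates such representations. Concretely, take the linear kernel in $\R$, training points $-1,0,1$, and $C=1$: the optimal center is $0$ with $R^*=1$ and optimal value $1$; the vector $\aalpha=(\tfrac12,0,\tfrac12)$ is dual optimal with value $1$, whereas $\aalpha'=(0,1,0)$ also represents the center $0$ and is optimal for ${\rm ({F_1^{\K}}_{MSVDD})}$ (with $R=1$ and $\xi_i=0$ for all $i$) yet has dual objective $0$. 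The paper is careful on exactly this point: it transfers optimal solutions only from the dual into ${\rm ({F_1^{\K}}_{MSVDD})}$, and in the reverse direction asserts only feasibility together with the objective inequality. Restated at the level of optimal values and of the induced center (hence the decision rule), rather than of individual $\aalpha$-vectors, your argument is correct and complete.
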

\begin{proof}
The formulation $(F^{\K}_{MSVDD})$ for the case of just one sphere
 is given by
\begin{alignat}{4}
{\rm ({F_1^{\K}}_{MSVDD})} \:  \min &\  R + \sum_{i \in N}C \xi_i \nonumber\\
\: \mbox{s.t. }  & \eqref{msvdd:35}, \nonumber \\
& \K(\x_i,\x_i) - 2 \sum_{k\in N} \alpha_{k} \K(\x_i,\x_k) + \sum_{k\in N} \sum_{l\in N}  \alpha_{k}\alpha_{l} \K(&& \x_k, \x_l) \nonumber \\
& \leq R + \xi_i, &&\forall i \in N, \label{unajdual_ctr}\\
&\sum_{i\in N} \alpha_{i}= 1. && \label{unajsumalpha} \\
&0 \leq \alpha_{i} \leq C, &&\forall i \in N,\label{unajalphaC}
%& R_j \geq 0, \forall j \in P,\\
%& \xi_i \geq 0, &&\forall i \in N,
\end{alignat}
where the $j$-index have been removed to simplify the notation for a single  sphere.
Moreover,  the classical dual formulation of SVDD, (F$^{\K}_{\text{DSVDD}}$), is given by,
\begin{alignat*}{4}
 {\rm (F^{\K}_{DSVDD})} \quad \max &\   -\sum_{k \in N} \sum_{l\in N} \hat{\alpha}_{k} \hat{\alpha}_{j} \K(\x_k,\x_l) + \sum_{k\in N} \hat{\alpha}_{k} \K(\x_k,\x_k) \nonumber\\
\mbox{s.t. } 
&\sum_{k\in N} \hat{\alpha}_{k}= 1, && \\
&0 \leq \hat{\alpha}_{k} \leq C, &&\forall k \in N.
\end{alignat*}
Indeed, multiplying each constraint of \eqref{unajdual_ctr} times $\alpha_i$ and taking the sum of all them, we obtain that
$$\sum_{i \in N} \alpha_i \K(\x_i,\x_i) - 2 \sum_{i \in N} \sum_{k\in N} \alpha_i \alpha_{k} \K(\x_i,\x_k) + \sum_{k\in N} \sum_{l\in N} \alpha_{k}\alpha_{l} \K(\x_k,\x_l) 
\leq R + \sum_{i \in N} \alpha_i \xi_i.$$
Or equivalently,
$$\sum_{i \in N} \alpha_i \K(\x_i,\x_i) -  \sum_{i \in N} \sum_{k\in N} \alpha_i \alpha_{k} \K(\x_i,\x_k)  \leq R + \sum_{i \in N} \alpha_i \xi_i \leq R + \sum_{i \in N} C \xi_i.$$
Thus, any feasible solution $(\r,\xi,\alpha)$  of  ${\rm ({F_1^{\K}}_{MSVDD})}$ verifies that  $(\xxi,\aalpha)$ is feasible for $(F^{\K}_{DSVDD})$ and the objective value for the former is greater or equal than the objective value for  the latter. Furthemore, from an optimal solution $(\hat{\xxi}^*,\hat{\aalpha}^*)$ of $(F^{\K}_{DSVDD})$, there exists an optimal primal solution of the classical SVDD, $(\hat{\r}^*,\hat{\c}^*,\hat{\xxi}^*)$, with the same objective value, i.e., 
$$\sum_{i \in N} \hat{\alpha}^*_i \K(\x_i,\x_i) -  \sum_{i \in N} \sum_{k\in N} \hat{\alpha}^*_i \hat{\alpha}^*_{k} \K(\x_i,\x_k) = \hat{R}^* + \sum_{i \in N}  C \hat{\xi}^*_i,$$
and $(\hat{\r}^*,\hat{\xxi}^*, \hat{\aalpha}^*_i)$ is feasible for ${\rm ({F_1^{\K}}_{MSVDD})}$ because 
\eqref{unajdual_ctr} is equivalent to the primal constraing $\|\Phi(\x_i)-\hat{\c}^*\|\le\hat{R}^*_i+\hat{\xi}_i$ by considering $\hat{\c}^*=\sum_{i\in N} \hat{\alpha}^*_i \Phi(\x_i)$.
\end{proof}

%\begin{propo}con radio >00
%Any optimal solution of (P) %satisfies that there is at least a %data point in the boundary of each %hypersphere. 
%\end{propo}
%\begin{propo}
%For any $C\geq \frac{1}{\sum \rho_i}$, the constraint $r_j\geq %0$, $\forall j\in P$, is not necessary. That is, without this %constraint, any optimal solution of (P) still satisfies %$r_j\geq 0$.
%\end{propo}
%\begin{proof}
%For any $C\geq \frac{1}{\sum \rho_i}$, we assume that %$(z,y,c,r,\xi)$ is an optimum with $r_{j_0}<0$ for some %$j_0\in P$. We prove that there exists another feasible %solution with lower objective value.
%Let us consider $(c',r',\xi')$ as follows:
%\begin{eqnarray*}
%\mbox{For } j\neq j_0&r'_j=r_j&r'_{j_0}=0\\
%\forall j\in P&c'=c&\\
%
%\end{eqnarray*}
%\end{proof}

%\begin{remark}
%Constraints \eqref{ctr:cond_r} %ensure that optimal values of (P) %and (D) are equal in those %degenerated cases where $C= %\frac{1}{\displaystyle\sum_{i\in %N} \rho_i}$. 
%\end{remark}

\section{Computational Experiments}\label{sec:ce}

{
%\section{Computational Experiments}

In this section, we validate the proposed MSVDD approaches through a series of computational experiments on both synthetic and real-world datasets.

The proposed primal and dual formulations were coded in Python 3.11 and solved using Gurobi 12.0 \citep[see][]{Gurobi2025}. The computational experiments were performed on a Windows 10 platform with an Intel Core i9-12900K processor (3.2 GHz) and 32 GB of RAM. The default settings of Gurobi were used, with a time limit of 3600 seconds imposed for solving each training instance in the experiments.

\subsection{Synthetic Datasets}

We randomly generated a set of instances following the scheme described by \cite{gornitz2017support}. Each dataset consists of planar coordinates sampled from two isotropic Gaussian distributions with standard deviations $0.5$ and $0.6$, respectively. These datasets are referred to as \textit{regular} data. To introduce noise, we added anomalous observations by fixing a noise level in $\{5\%, 10\%, 15\%, 20\%\}$. For each noise level, the corresponding proportion of points was uniformly sampled and placed close to, but outside of the regular clusters. In Figure \ref{f:data_test}, we show one of the instances with $15\%$ of anomalous observations. For this purpose, each instance consisted of $100$ training data points, $66$ validation data points, and $166$ testing data points, including the percentage of anomalous observations.

\begin{figure}[h!]
    \centering
    \includegraphics[width=0.48\linewidth]{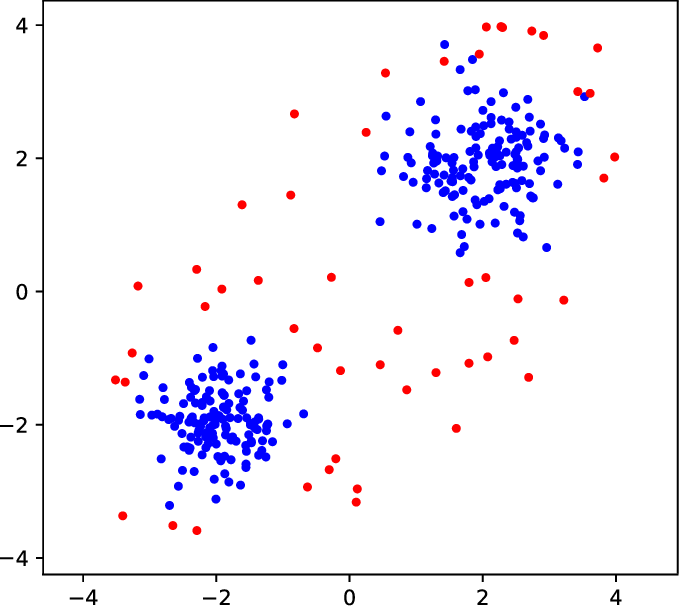}
    \caption{Example of synthetic dataset with 15\% of anomalous data. Blue points are labeled as \textit{regular} data and red points as \textit{anomalous}.
    }
    \label{f:data_test}
\end{figure}

First, we analyze the computational efficiency (in terms of CPU time) of our approaches. Figure~\ref{f:perf_prof} presents the performance profiles for both MSVDD and $\K$-MSVDD as we vary the parameters $p$ and $C$. In all four plots, the $x$-axis represents the required CPU time, while the $y$-axis indicates the percentage of instances optimally solved within that time. We observe that for MSVDD, instances with larger values of $p$ are more challenging. This is expected, as increasing $p$ directly leads to a more complex combinatorial structure and a higher number of binary variables in the model. Conversely, we find that as the value of $C$ decreases, the primal problems become more computationally demanding. As we will discuss later, a smaller $C$ implies a greater number of observations identified as outliers, which in turn increases the combinatorial complexity of the problem. 

For the dual model, we extended the analysis of the previous instances to different values of $\sigma$. The results indicate that the performance trend remains consistent for values of the regularization parameter $C$ up to approximately $0.20$. However, beyond this value, increasing $C$ makes the instances increasingly challenging.

\begin{figure}[h!]
		\begin{center}
			\subfigure[MSVDD varying $p$]
			{
				\includegraphics[scale=0.12]{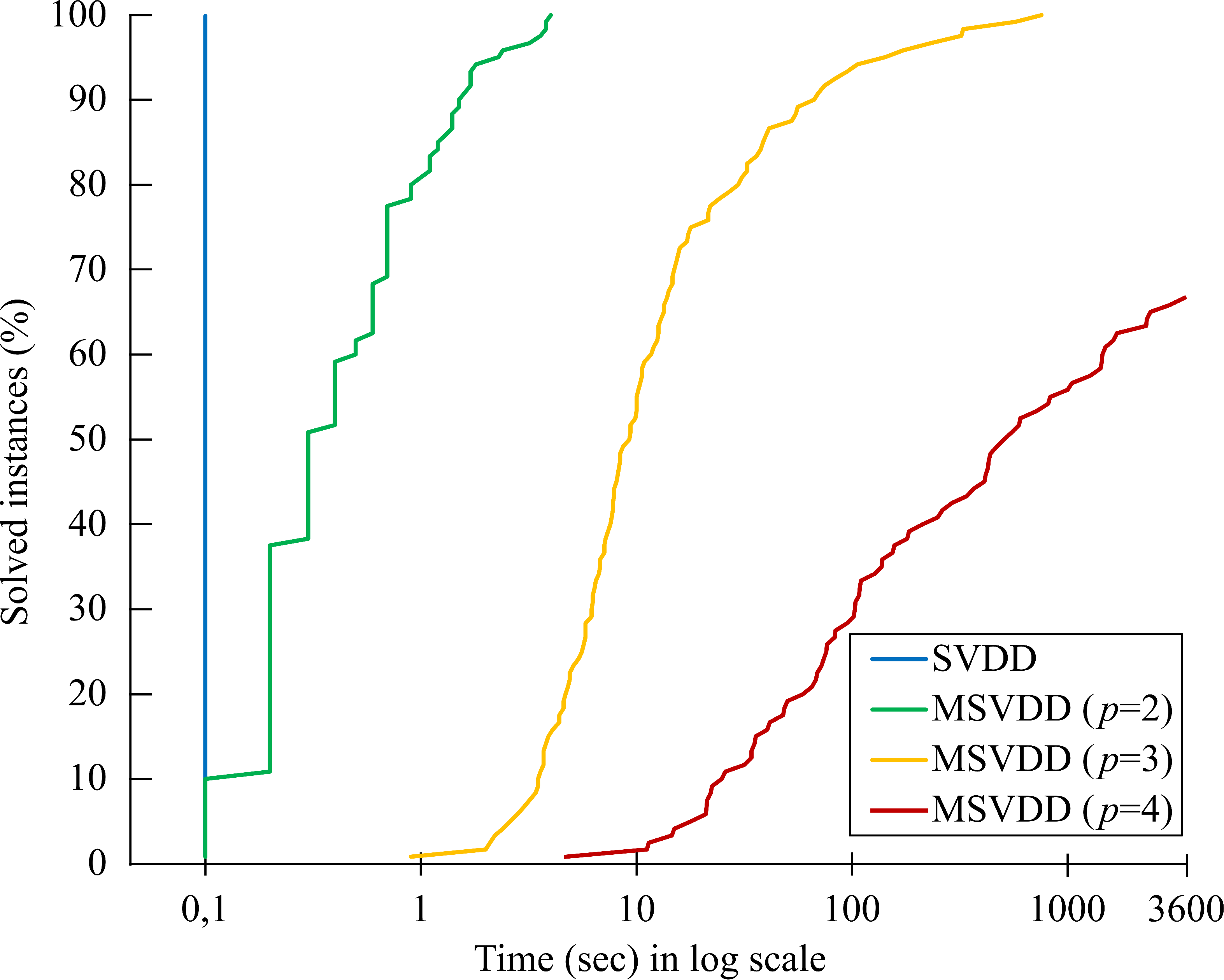}
				\label{f:perf_prof_primal_p}
				\vspace*{0.2cm}}
			\subfigure[MSVDD varying $C$]
			{
				\includegraphics[scale=0.12]{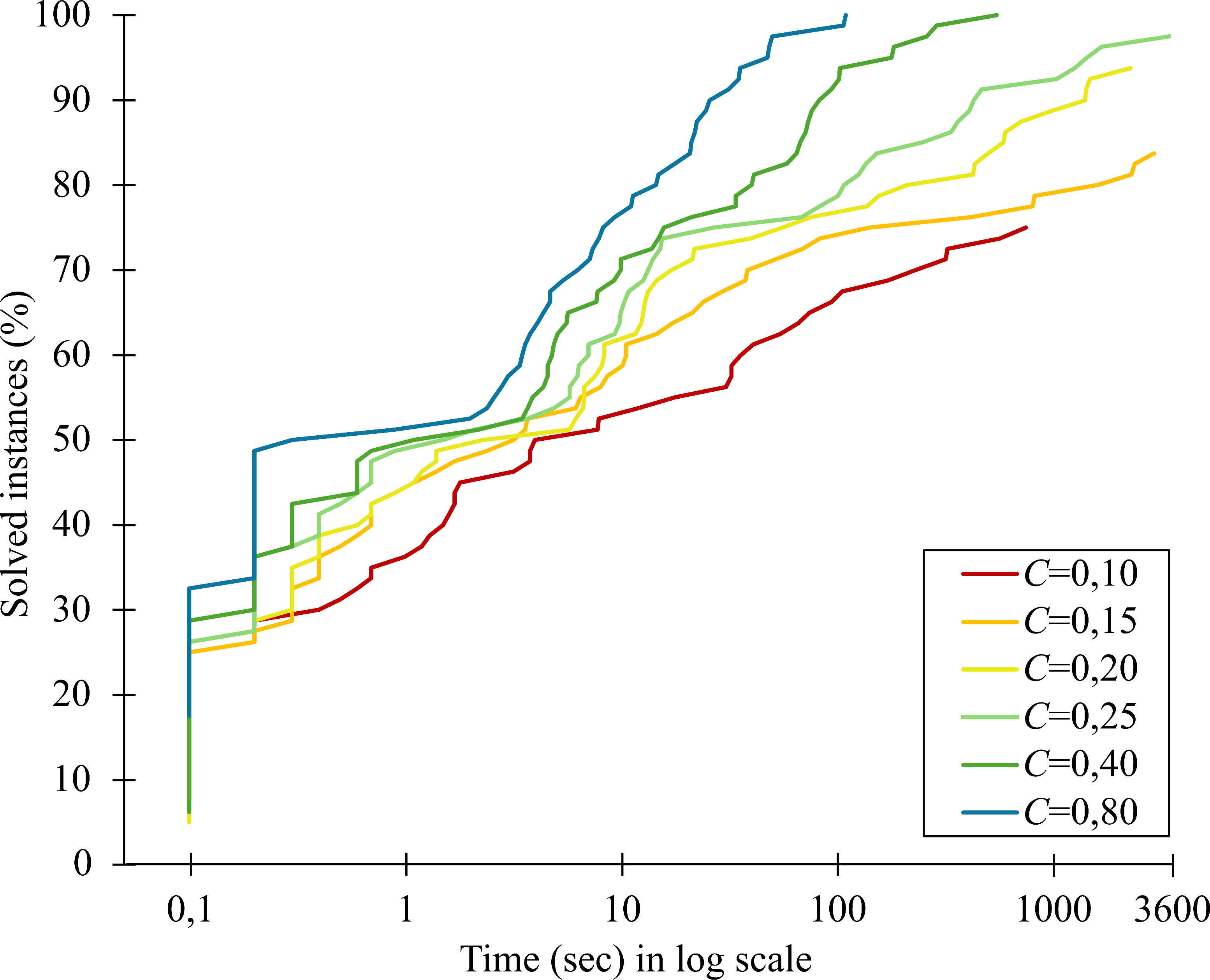}
				\label{f:perf_prof_primal_C}
				\vspace*{0.1cm}}
            \subfigure[$\K$-MSVDD varying $p$]
			{
				\includegraphics[scale=0.12]{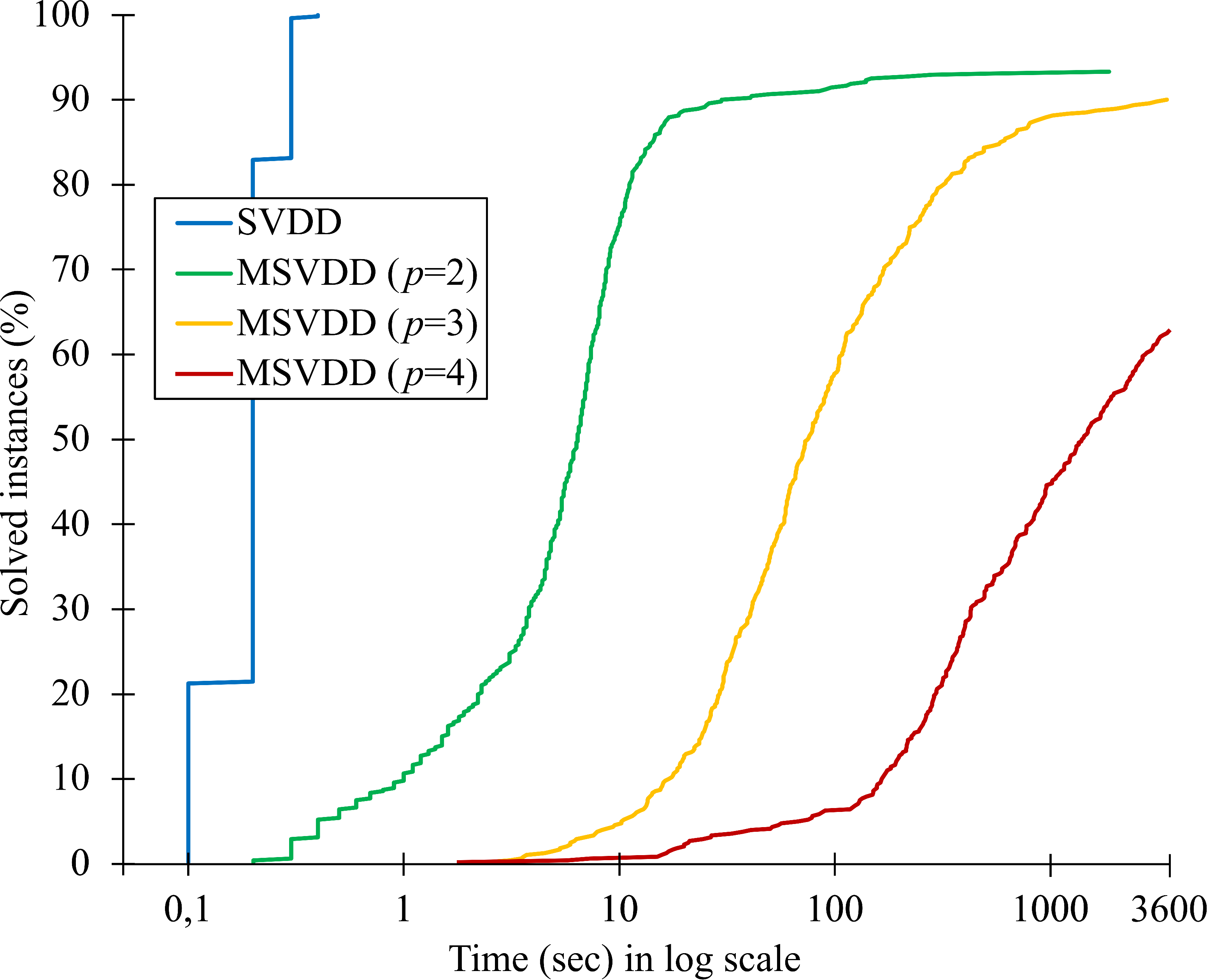}
				\label{f:perf_prof_dual_p}
				\vspace*{0.2cm}}
			\subfigure[$\K$-MSVDD varying $C$]
			{
				\includegraphics[scale=0.12]{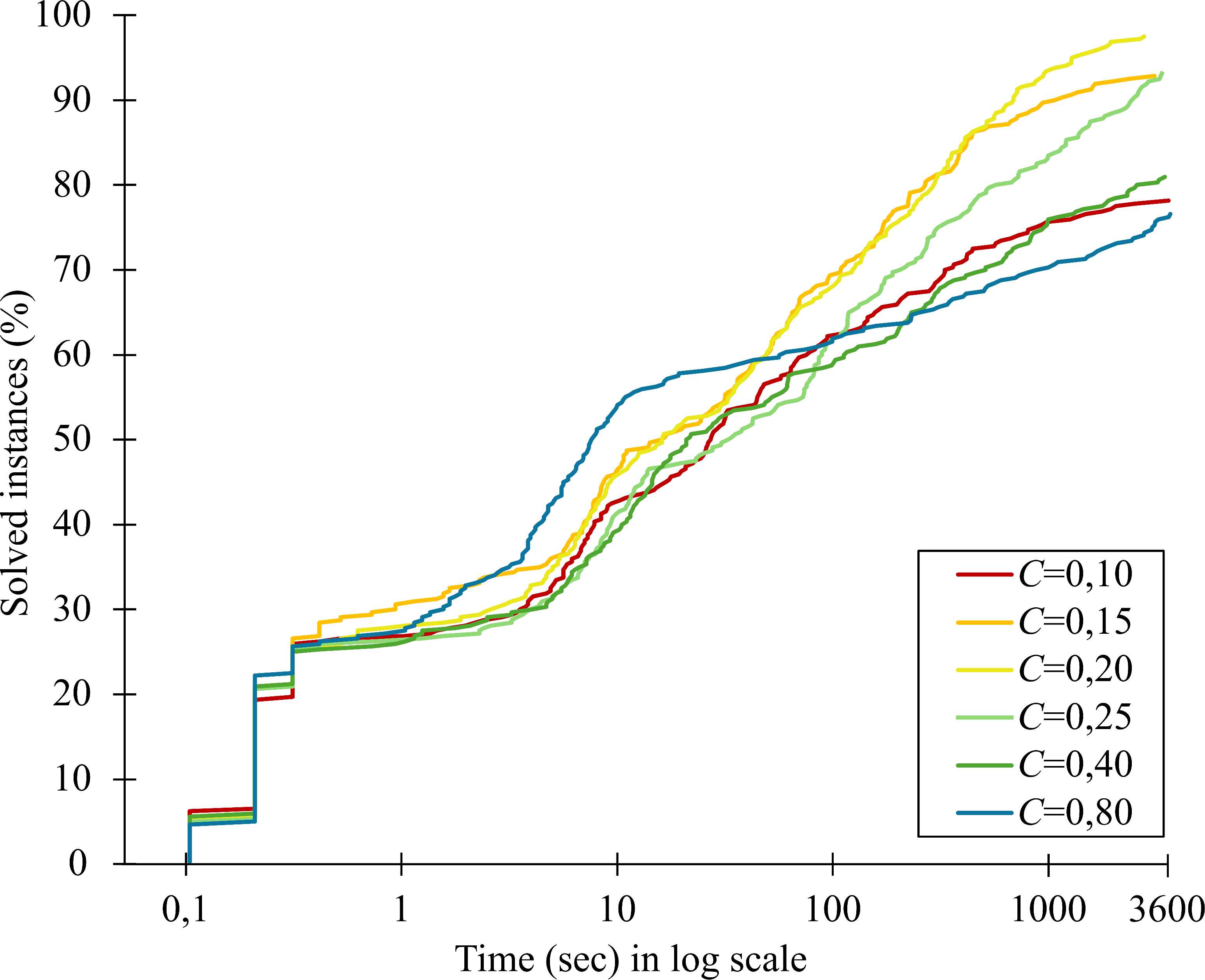}
				\label{f:perf_prof_dual_C}
				\vspace*{0.1cm}}

			\caption{Performance profile for MSVDD and $\K$-MSVDD varying $p$ and regularization parameter $C$.
            }  
    \label{f:perf_prof}
		\end{center}
\end{figure}

In what follows, we evaluate the performance of our approach as a machine learning tool for identifying anomalous data within a sample, comparing it to existing methods for multimodal data, specifically, the heuristic procedure \texttt{ClusterSVDD} proposed by \cite{gornitz2017support}. In that  approach, the procedure begins with a random partitioning of the data into $p$ clusters. Subsequently, a single-sphere SVDD model is trained on each cluster, with a predefined proportion of outliers relative to the total number of observations. The procedure then alternates between updating the cluster assignments and re-estimating the SVDD models in the new resulting clusters, until the assignments no longer change from the previous iteration. In \texttt{ClusterSVDD} } the regularization parameter applied to each cluster is set as $C = 1/\nu N_k$, where $N_k$ is the number of training observations for the hypersphere $k$, with $k\in \{1, ...,p\}$, and $\nu$ denotes the assumed fraction of outliers within each cluster. Since the number of data points assigned to each cluster $k$ is known in advance, the number of outliers per sphere can be directly controlled.

In contrast, our exact optimization model simultaneously determines the hyperspheres, the assignment of points to each hypersphere, and the identification of trainning outliers. As a result, the number of points in each hypersphere is not predefined, so we cannot directly specify the number of anomalous points per hypersphere beforehand. Consequently, there is not a correspondence between the regularization parameter $C$ of our MSVDD and the $\nu$ paramenter in \texttt{ClusterSVDD}. Therefore, we separately determine their optimal values through cross-validation. However, to conceptually relate our regularization parameter $C$ to \texttt{ClusterSVDD}'s $\nu$, particularly for the synthetic datasets with components of roughly equal numbers of data points, we can consider the approximate relationship $C = p / \nu N$.

For instance, considering the examples shown in Figure~\ref{f:gor_nu_0.05}, with $N = 100$ training points, with  5\% of outliers, we must set  $\nu = 0.05$ in \texttt{ClusterSVDD}. In this scenario, using the expression $C=p/\nu N$, the value of $C$ for our model would be $0.4$ (see Figure~\ref{f:ms_C_0.4}), which differs from $0.2$ (see Figure~\ref{f:ms_C_0.2}) as implied by the relationship $C = 1/\nu N$ (for $p=1$). Notably, with $C = 0.4$, our method achieves a comparable objective value and identifies the same number of outliers as the heuristic method, although not the same outlier observations.

\begin{figure}[h!]
	\begin{center}
        \subfigure[$\nu=0.05$, $O.V.=5.3562$, $\#outliers=6$.]
        {
            \includegraphics[scale=0.48]{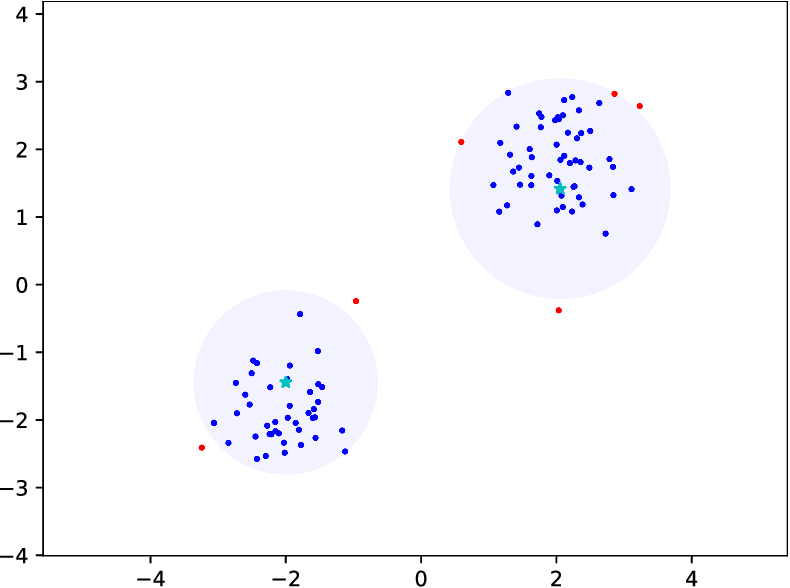}
            \label{f:gor_nu_0.05}
        } \\
        \vspace*{0.1cm}
        \subfigure[$C=0.2$, $O.V.=4.3334$, $\#outliers=9$.]
        {
            \includegraphics[scale=0.48]{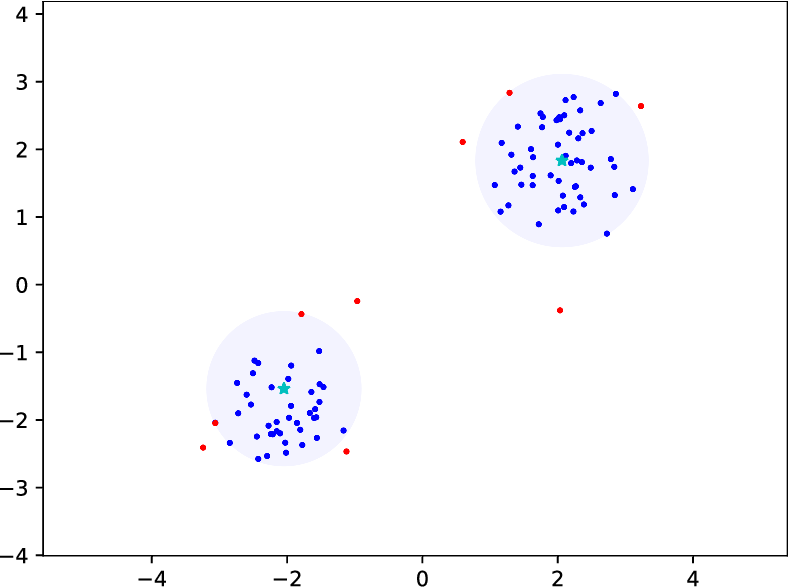}
            \label{f:ms_C_0.2}
        }
        \hfill
        \subfigure[$C=0.4$, $O.V.=5.2748$, $\#outliers=6$.]
        {
            \includegraphics[scale=0.48]{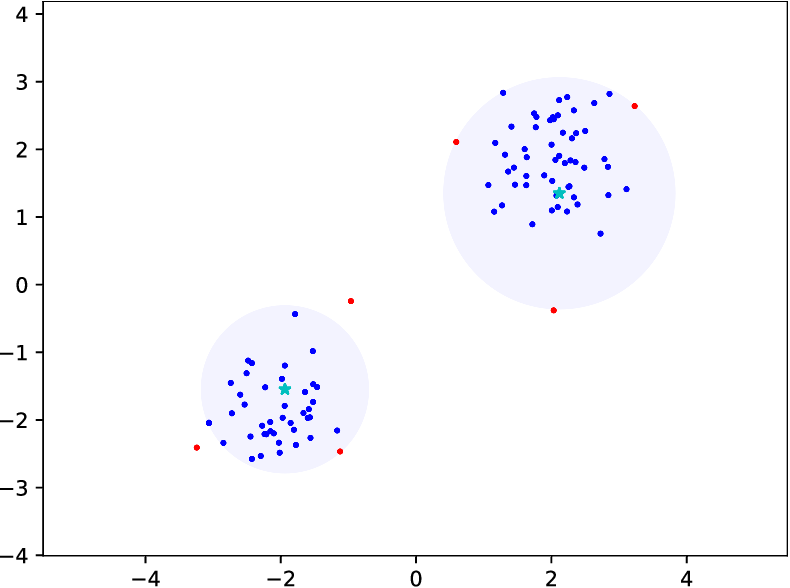}
            \label{f:ms_C_0.4}
        }
        \caption{Solutions obtained for $p=2$ with \texttt{ClusterSVDD} (top) and MSVDD (bottom). Dataset with  $N=100$ and 5\% of anomalous data. Blue points are classified as \textit{regular} data and red points as \textit{outliers}. 
        }
        \label{f:ms_C}
	\end{center}
\end{figure}

For each instance, we then run \texttt{ClusterSVDD} (using their code publicly available at GitHub\footnote{\url{https://github.com/nicococo/ClusterSvdd}}) for $\nu$-values in $
\{0.025, 0.05, 0.075, 0.1, 0.15, 0.2\}$ 
and our MSVDD method for the $C$-parameters in
$\{0.1, 0.15, 0.2, 0.25, 0.4, 0.8\}$.

Additionally, we run the kernelized dual version of our model for the RBF kernel with parameter $\sigma^2 \in \{0.05, 0.1, 0.25, 0.5\}$. Then, since the observations of our synthetic instances are already labeled as regular/anomalous, we compute the AUC-ROC (Area Under the Receiver Operating Characteristic Curve). The AUC-ROC evaluates the ability of a scoring function to distinguish between regular and outliers observations. Specifically, it is defined as the probability that a randomly chosen regular observation has a higher score than a randomly chosen anomalous observation. Formally, for a set of predicted scores, the AUC is computed by comparing the true positive rate and false positive rate across all possible thresholds. A value of AUC equal to $1$ indicates perfect discrimination, while a value of $0.5$ corresponds to random guessing.

Tables \ref{t:loose_prim} and \ref{t:loose_dual} summarize the AUC-ROC scores obtained for instances with 5\%, 10\%, 15\%, and 20\% anomalous data, presenting results for the primal and dual versions of the models, respectively. The first column indicates the model used: \texttt{ClusterSVDD} and MSVDD. The second column displays the percentage of anomalous data added to the dataset. The subsequent four columns report the mean and standard deviation of the AUC-ROC scores obtained from five instances for each evaluated $p$ value. The corresponding optimal $\nu$ (for \texttt{ClusterSVDD}) or $C$ (for MSVDD) value, determined via cross-validation, is indicated in brackets alongside the scores.

As an example of parameter cross-validation, Figure \ref{f:prim_aucroc} display the mean and standard deviation of the AUC-ROC scores obtained from five instances, which were generated with 15\% anomalous data points. After parameter cross-validation, the optimal regularization parameters for the proposed MSVDD method were determined to be $C=0.15$ and $p=3$, in the case of primal model (see Figure \ref{f:ms_prim_aucroc}).  On the other hand,  cross-validation for \texttt{ClusterSVDD} indicated that the optimal parameter combination for training that model was $\nu=0.15$ with $p=3$  (see Figure \ref{f:gor_prim_aucroc}). %Unlike our proposed method, \texttt{ClusterSVDD} generates a number of spheres with non-zero radii equal to the value of parameter $p$. \RC{ANTONIO: Eliminamos esta \'ultima frase???Creo que puede generar mucho ruido.}

Regarding the accuracy of the primal version of both methodologies (see Table \ref{t:loose_prim}), when $p=1$ (corresponding to the classical single-sphere SVDD), both models exhibit comparable performance, although 
MSVDD typically achieves slightly higher accuracy and lower variance. As the number of hyperspheres increases, the performance advantage of 
MSVDD becomes more pronounced. Notably, for $p=3$ and $p=4$, the proposed method consistently attains near-perfect accuracy, even under high noise levels, while maintaining very low variability across runs. In contrast, \texttt{clusterSVDD} struggles to maintain its performance in these more complex configurations, with a clear drop in accuracy and a noticeable increase in variability. These results demonstrate the robustness and effectiveness of the exact optimization formulation in capturing the structure of the data and isolating anomalous observations, particularly when multiple hyperspheres are used to model heterogeneous distributions.

\begin{table}[htp]
    \makebox[\textwidth][c]{%
        {\footnotesize
        \renewcommand{\arraystretch}{1.2} % valor por defecto es 1.0
         \adjustbox{scale=0.7}{\begin{tabular}{
        l
        c
        % Nota: Ajusté el formato a 1.4 en todas las columnas para consistencia.
        S[table-format=1.4] @{ / } S[table-format=1.9, table-align-text-post=false]
        S[table-format=1.4] @{ / } S[table-format=1.9, table-align-text-post=false]
        S[table-format=1.4] @{ / } S[table-format=1.9, table-align-text-post=false]
        S[table-format=1.4] @{ / } S[table-format=1.9, table-align-text-post=false]
    }
    \hline
    Model & {\% anom.} & \multicolumn{2}{c}{$p=1$ (SVDD)} & \multicolumn{2}{c}{$p=2$} & \multicolumn{2}{c}{$p=3$} & \multicolumn{2}{c}{$p=4$} \\
    \hline
        \texttt{ClusterSVDD}  &  5   &  0.3293 & 0.1992\ (0.075) &  0.8053 & 0.3749\ (0.2)  &  0.9831 & 0.0098\ (0.2)  &  0.8119 & 0.2188\ (0.15) \\      
        ${\text{MSVDD}}$    &  5   &  0.3624 & 0.1721\ (0.8)   &  0.9920 & 0.0074\ (0.1)  &  0.9925 & 0.0042\ (0.2)  &  \bfseries 0.9943 & \bfseries 0.0055\ (0.2)  \\ \hline    
        \texttt{ClusterSVDD}  &  10  &  0.4828 & 0.0948\ (0.075) &  0.9371 & 0.0378\ (0.1)  &  0.8771 & 0.1011\ (0.2)  &  0.7488 & 0.1485\ (0.2)  \\     
        ${\text{MSVDD}}$    &  10  &  0.4830 & 0.0816\ (0.3)   &  0.9801 & 0.0290\ (0.1)  &  \bfseries 0.9982 & \bfseries 0.0027\ (0.1)  &  0.9974 & 0.0022\ (0.1)  \\ \hline     
        \texttt{ClusterSVDD}  &  15  &  0.5354 & 0.0638\ (0.05)  &  0.8547 & 0.1452\ (0.15) &  0.8836 & 0.0904\ (0.15) &  0.8179 & 0.0868\ (0.1)  \\     
        ${\text{MSVDD}}$    &  15  &  0.5354 & 0.0611\ (0.15)  &  0.9673 & 0.0170\ (0.1)  &  \bfseries 0.9860 & \bfseries 0.0016\ (0.15)  &  0.9836 & 0.0202\ (0.15) \\ \hline      
        \texttt{ClusterSVDD}  &  20  &  0.4152 & 0.0583\ (0.2)   &  0.6967 & 0.1947\ (0.1)  &  0.7880 & 0.1594\ (0.2)  &  0.7094 & 0.1611\ (0.1)  \\     
        ${\text{MSVDD}}$    &  20  &  0.4255 & 0.0534\ (0.8)   &  0.9070 & 0.0736\ (0.1)  &  \bfseries 0.9615 & \bfseries 0.0329\ (0.1)  &  0.9531 & 0.0297\ (0.15) \\  
        \hline
        \end{tabular}}%
        }
    }
    \caption{Mean and standard deviation of AUC-ROC values obtained after cross-validation for instances with synthetic data using primal models. The corresponding optimal $\nu$ or $C$ value is indicated in brackets. 
    }
    \label{t:loose_prim}
\end{table}

The parameter cross-validation results for the dual versions (see Table \ref{t:loose_dual}) demonstrate that ${\K}$-MSVDD  consistently outperforms $\K$-\texttt{ClusterSVDD} (the dual/kernelized versions of MSVDD and \texttt{ClusterSVDD} for the RBF kernel indicated above), specially as the number of hyperspheres increases. This model not only achieves significantly higher AUC-ROC scores, indicating a better anomaly discrimination capability, but also exhibits greater stability (reflected in lower standard deviations) in its results. The combination of MSVDD with the kernel trick allows its decision boundary to adapt more flexibly and precisely to the shape of the data, which has a positive impact on the accuracy of detecting outliers in the training sample. However, this increased flexibility may also lead to overfitting on the test sample, as evidenced by the AUC-ROC values reported in the Table \ref{t:loose_dual}. Actually, when comparing the AUC-ROC scores obtained from the optimal solutions of the primal and dual models, by varying the $\sigma^2$ parameter in the dual, while keeping the proportion of anomalous data and the parameters $p$ and $C$ fixed in both, the dual model outperformed the primal in 70.6\% of the cases.

\begin{table}[htp]
    \makebox[\textwidth][c]{%
        {\footnotesize
        \renewcommand{\arraystretch}{1.2} % valor por defecto es 1.0
         \adjustbox{scale=0.7}{
         \begin{tabular}{
        l
        c
        % Nota: Ajusté el formato a 1.4 en todas las columnas para consistencia.
        S[table-format=1.4] @{ / } S[table-format=1.9, table-align-text-post=false]
        S[table-format=1.4] @{ / } S[table-format=1.9, table-align-text-post=false]
        S[table-format=1.4] @{ / } S[table-format=1.9, table-align-text-post=false]
        S[table-format=1.4] @{ / } S[table-format=1.9, table-align-text-post=false]
    }
    \hline
    Model & {\% anom.} & \multicolumn{2}{c}{$p=1$ (SVDD)} & \multicolumn{2}{c}{$p=2$} & \multicolumn{2}{c}{$p=3$} & \multicolumn{2}{c}{$p=4$} \\
    \hline
    $\K$-\texttt{ClusterSVDD}   &  5   &  0.9824 & 0.0135\ (0.2)   &  0.9837 & 0.0111\ (0.15)  &  0.9830 & 0.0119\ (0.025) &  0.9810 & 0.0130\ (0.15)  \\
    ${\K}$-MSVDD     &  5   &  0.9753 & 0.0170\ (0.1)   &  0.9842 & 0.0170\ (0.8)   &  0.9849 & 0.0114\ (0.8)   &  \bfseries 0.9946 & \bfseries 0.0045\ (0.8)   \\ \hline
    $\K$-\texttt{ClusterSVDD}   &  10  &  0.9740 & 0.0292\ (0.1)   &  0.9832 & 0.0176\ (0.2)   &  0.9792 & 0.0190\ (0.1)   &  0.9788 & 0.0221\ (0.075) \\
    ${\K}$-MSVDD     &  10  &  0.9759 & 0.0255\ (0.1)   &  0.9833 & 0.0154\ (0.4)   &  0.9806 & 0.0200\ (0.8)   &  \bfseries 0.9818 & \bfseries 0.0197\ (0.8)   \\ \hline
    $\K$-\texttt{ClusterSVDD}   &  15  &  0.9456 & 0.0293\ (0.2)   &  0.9664 & 0.0128\ (0.025) &  0.9617 & 0.0132\ (0.05)  &  0.9623 & 0.0137\ (0.15)  \\
    ${\K}$-MSVDD     &  15  &  0.9508 & 0.0231\ (0.1)   &  0.9665 & 0.0200\ (0.2)   &  0.9637 & 0.0175\ (0.4)   &  \bfseries 0.9730 & \bfseries 0.0135\ (0.8)   \\ \hline
    $\K$-\texttt{ClusterSVDD}   &  20  &  0.9291 & 0.0179\ (0.025) &  0.9329 & 0.0290\ (0.075) &  0.9278 & 0.0184\ (0.025) &  0.9282 & 0.0178\ (0.1)   \\
    ${\K}$-MSVDD     &  20  &  0.9255 & 0.0196\ (0.1)   &  0.9224 & 0.0209\ (0.8)   &  \bfseries 0.9418 & \bfseries 0.0243\ (0.8)   &  0.9392 & 0.0233\ (0.8)   \\
    \hline
    \end{tabular}
    }%
    }
    }
    \caption{Mean and standard deviation of AUC-ROC values obtained after cross-validation for instances with synthetic data using dual models. The corresponding optimal $\nu$ or $C$ value is indicated in brackets.
    }
    \label{t:loose_dual}
\end{table}

% RESUMEN PRIMAL vs DUAL

% \begin{table}[h!]
%     \makebox[\textwidth][c]{%
%         {\small
%         \renewcommand{\arraystretch}{1.2} % valor por defecto es 1.0
%          \adjustbox{scale=0.75}{
%          \textcolor{red}{
%             \begin{tabular}{lc}
%                 \hline
%                  & \textbf{Count} \\
%                 \hline
%                 \textbf{Cases with Optimal Solutions for Both Models:} & \textbf{435}\\
%                 \quad Dual AUC-ROC $\ge$ Primal AUC-ROC & 307 \\
%                 \quad Primal AUC-ROC $>$ Dual AUC-ROC & 128 \\
%                 \textbf{Cases with Only One Optimal Solution:} & \textbf{45}\\
%                 \quad Primal optimal only & 5 \\
%                 \quad Dual optimal only & 40 \\
%                 \textbf{Cases with No Optimal Solution for Either Model:} & \textbf{0} \\ \hline
%                 \textbf{Additional Information} & \\
%                 \quad Missing Primal solution & 0 \\
%                 \quad Missing Dual solution & 5 \\
%                 \hline
%             \end{tabular}
%         }
%         }
%         }
%     }
%     \caption{\textcolor{red}{INFORMATIVA (BORRAR): Summary of Optimal Solution Comparison between Primal and Dual Models}}
%     \label{tab:optimal_solution_comparison}
% \end{table}

\begin{figure}[h!]
    \centering
    \makebox[\linewidth]{%
        \begin{minipage}{\textwidth + 3cm} % Ajusta '3cm' al ancho adicional que desees
            \centering % Centra el contenido dentro del minipage
            \subfigure[clusterSVDD]
            {
                \includegraphics[scale=0.5]{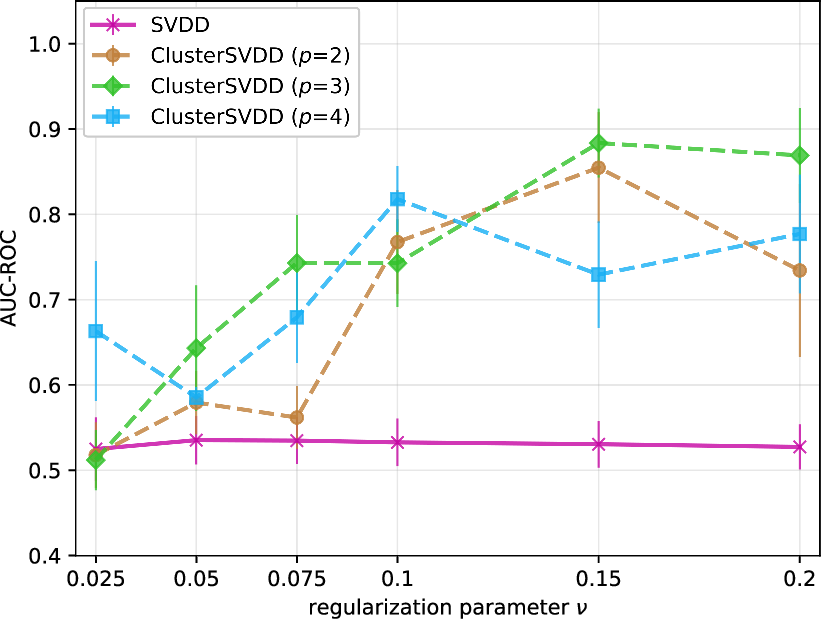}
                \label{f:gor_prim_aucroc}
                \vspace*{0.2cm}
            }
            \subfigure[MSVDD]
            {
                \includegraphics[scale=0.5]{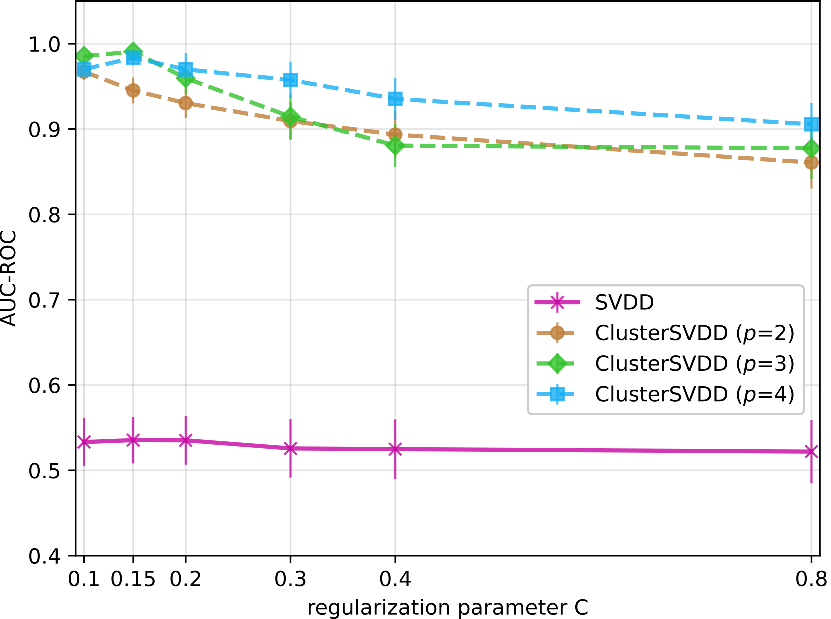}
                \label{f:ms_prim_aucroc}
                \vspace*{0.1cm}
            }
        \end{minipage}%
    }
    \caption{AUC-ROC mean values for \texttt{ClusterSVDD} (left) and ${\text{MSVDD}}$ (left) with $p=1$ (standard SVDD), $p\in \{2, 3, 4\}$, for varying regularization parameter $\nu$ and $C$, respectively. Anomalous fraction of 15\% in the dataset.}
    \label{f:prim_aucroc}
\end{figure}

\subsection{Real Datasets}

To evaluate the proposed model MSVDD on real-world datasets and compare its performance with \texttt{ClusterSVDD}, we selected three real-world multi-class datasets sourced from the \texttt{libSVM}\footnote{\url{https://www.csie.ntu.edu.tw/~cjlin/libsvmtools/datasets/}} library, see Table \ref{tab:datasets}), where all features are scaled to the range $[-1, 1]$.

For each dataset, the selected classes were partitioned into training (30\%), validation (20\%), and test (50\%) sets, all designated as regular data. Subsequently, varying percentages of anomalous data (5\%, 10\%, 15\%, and 20\%) were added, as detailed for each specific case below. To conduct cross-validation over distinct pairs of parameters $p$, $C$ and $\nu$ (see Table \ref{tab:reg_param}), five distinct instances were generated for each dataset by randomly partitioning the regular data and randomly introducing anomalous data in each iteration.

The first dataset employed is the widely recognized Iris database. This dataset comprises 3 classes with a total of 150 data points. The specified percentages of anomalous data were randomly introduced into these instances.

Next, we employed the Ionosphere dataset, which contains two classes and a total of 351 data points. Positive class (225 data points) was designated as regular data, and the specified percentages of anomalous data were randomly extracted from negative class.

Finally, the Segment dataset was employed. This dataset contains 7 classes with a total of 2310 data points. Three of these classes (totaling 990 data points) were labeled as regular data, while the corresponding percentages of anomalous data were extracted from the remaining four classes.

\begin{table}[h!]
\makebox[\textwidth][c]{%
{\footnotesize
\renewcommand{\arraystretch}{1.2} % valor por defecto es 1.0
 \adjustbox{scale=0.85}{\begin{tabular}{lcccp{6cm}} % ampliado ancho para mejor legibilidad
\hline
Dataset & Classes & \#Instances & Features & Description \\
\hline
\textit{Iris} & 3 & 150 (50 per class) & 4 & Measurements of iris flowers from three species (setosa, versicolor, virginica). \\ \hline
\textit{Ionosphere} & 2 & 225 (class 1), 126 (class -1) & 34 & Radar signal returns from the ionosphere classified as either good or bad reflections. \\ \hline
\textit{Segment} & 7 & 2310 (330 per class) & 19 & High-level numeric descriptors of segmented regions from seven outdoor images. \\
\hline
\end{tabular}}
}
}
\caption{Details of the datasets used.}
\label{tab:datasets}
\end{table}

Table \ref{tab:results_real_DB} summarizes cross-validation results. Theses results demonstrate that for every evaluated parameters combination (i.e., pairs of $p$ and $C$ for MSVDD, and $p$ and $\nu$ for \texttt{clusterSVDD}), MSVDD consistently outperforms \texttt{clusterSVDD} across all datasets and anomalous data percentages, achieving higher AUC-ROC scores, particularly with $p\geq 2$ or $p\geq 3$. Furthermore, MSVDD generally exhibits greater stability, as indicated by its lower standard deviations when achieving optimal performance.

\begin{table}[ht]
\centering
{\footnotesize
\renewcommand{\arraystretch}{1.2} % valor por defecto es 1.0
\begin{tabular}{l l}
\hline
libSVM dataset & Regularization parameters\\
\hline
\textit{Iris} & $C = \{0.07,\ 0.08,\ 0.09,\ 0.1,\ 0.11,\ 0.12,\ 0.13\}$ \\
              & $\nu = \{0.05,\ 0.075,\ 0.1,\ 0.125,\ 0.15,\ 0.175,\ 0.2\}$ \\ \hline
\textit{Ionosphere} & $C = \{0.025,\ 0.05,\ 0.1,\ 0.2,\ 0.3,\ 0.4,\ 0.5\}$ \\
              & $\nu = \{0.05,\ 0.075,\ 0.1,\ 0.125,\ 0.15,\ 0.175,\ 0.2\}$ \\ \hline
\textit{Segment} & $C = \{0.025,\ 0.05,\ 0.1,\ 0.2,\ 0.3,\ 0.4,\ 0.5\}$ \\
              & $\nu = \{0.05,\ 0.075,\ 0.1,\ 0.125,\ 0.15,\ 0.175,\ 0.2\}$ \\
\hline
\end{tabular}
}
\caption{Regularization parameter values used in each real dataset.}
\label{tab:reg_param}
\end{table}

\begin{table}[h!]
\makebox[\textwidth][c]{%
{\footnotesize
\renewcommand{\arraystretch}{1.2} % valor por defecto es 1.0
\centering
 \adjustbox{scale=0.85}{
\begin{tabular}{c|c|c|c|c|c}
\hline
\textbf{Dataset} & \textbf{Model} & \% Anom. & SVDD & $p=2$ & $p=3$ \\
\hline
\multirow{8}{*}{\rotatebox{90}{Iris}} 
  & \texttt{clusterSVDD} & 5 & 0.5693 / 0.1389 & 0.7880 / 0.1359 & 0.8267 / 0.1646 \\
  & MSVDD       & 5 & 0.5760 / 0.1452 & 0.8787 / 0.1041 & \textbf{0.9440 / 0.0394} \\
  & \texttt{clusterSVDD} & 10 & 0.5937 / 0.0762 & 0.8093 / 0.0915 & 0.8423 / 0.0978 \\
  & MSVDD       & 10 & 0.5970 / 0.0785 & \textbf{0.9133 / 0.0204} & 0.9080 / 0.0455 \\
  & \texttt{clusterSVDD} & 15 & 0.6167 / 0.0559 & 0.8575 / 0.0569 & 0.8698 / 0.0417 \\
  & MSVDD       & 15 & 0.6145 / 0.0820 & 0.8924 / 0.0380 & \textbf{0.9091 / 0.0455} \\
  & \texttt{clusterSVDD} & 20 & 0.5849 / 0.0250 & 0.8404 / 0.1137 & 0.8329 / 0.0786 \\
  & MSVDD       & 20 & 0.5902 / 0.0292 & \textbf{0.8450 / 0.1570} & 0.8363 / 0.1645 \\
\hline
\multirow{8}{*}{\rotatebox{90}{Ionosphere}} 
  & \texttt{clusterSVDD} & 5 & 0.8068 / 0.0821 & 0.8513 / 0.0746 & 0.8578 / 0.1100 \\
  & MSVDD       & 5 & 0.8174 / 0.0608 & 0.8523 / 0.0649 & \textbf{0.8782 / 0.0815} \\
  & \texttt{clusterSVDD} & 10 & 0.7408 / 0.0766 & 0.7855 / 0.0575 & 0.7673 / 0.0871 \\
  & MSVDD       & 10 & 0.7492 / 0.0908 & 0.7966 / 0.0594 & \textbf{0.8454 / 0.0394} \\
  & \texttt{clusterSVDD} & 15 & 0.7731 / 0.0590 & 0.8073 / 0.0382 & 0.7847 / 0.0536 \\
  & MSVDD       & 15 & 0.7749 / 0.0900 & 0.8125 / 0.0985 & \textbf{0.8330 / 0.0714} \\
  & \texttt{clusterSVDD} & 20 & 0.8188 / 0.0485 & 0.8303 / 0.0668 & 0.8599 / 0.0370 \\
  & MSVDD       & 20 & 0.8260 / 0.0524 & 0.8423 / 0.0392 & \textbf{0.8823 / 0.0654} \\
\hline
\multirow{8}{*}{\rotatebox{90}{Segment}} 
  & \texttt{clusterSVDD} & 5 & 0.9971 / 0.0013 & 0.8991 / 0.1909 & 0.8361 / 0.1226 \\
  & MSVDD       & 5 & 0.9816 / 0.0082 & 0.9992 / 0.0007 & \textbf{0.9993 / 0.0006} \\
  & \texttt{clusterSVDD} & 10 & 0.9732 / 0.0111 & 0.9401 / 0.0241 & 0.7507 / 0.1259 \\
  & MSVDD       & 10 & 0.8923 / 0.0537 & 0.9686 / 0.0079 & \textbf{0.9889 / 0.0133} \\
  & \texttt{clusterSVDD} & 15 & 0.8489 / 0.0832 & 0.7422 / 0.1161 & 0.6329 / 0.1970 \\
  & MSVDD       & 15 & 0.7707 / 0.1293 & 0.8706 / 0.1201 & \textbf{0.9488 / 0.0338} \\
  & \texttt{clusterSVDD} & 20 & 0.8083 / 0.0285 & 0.7051 / 0.0606 & 0.6030 / 0.1505 \\
  & MSVDD       & 20 & 0.7529 / 0.0157 & 0.8607 / 0.0509 & \textbf{0.9201 / 0.0711} \\
\hline
\end{tabular}
}
}
}
\caption{Mean and standard deviation of AUC-ROC values obtained after cross-validation for instances with real datasets.}
\label{tab:results_real_DB}
\end{table}

\subsection{Outlier Detection and Optimality}\label{sec:Optim_vs_metrics}

One of the advantages of our mathematical optimization model is the solution procedure that is efficiently  implemented in the available optimization solvers to obtain its optimal solution via the construction of a branch-and-bound tree. Then, in the process of optimally solving the problem, a pool of feasible solutions is generated, and used to bound the solutions and certify the optimality of the final one.  Although, in principle, there is not a direct relation between the optimal solution with the training sample and the evaluation of the subsequent classification rule in test samples, the optimal solution is preferred because its structural properties. However, each time a feasible solution is obtained in the branch-and-bound tree, even if it is not certified to be optimal, produces a classification rule that can be evaluated in the test sample, and in case its performance is the best with respect to some metric, one may prefer to output this outlier detection rule instead of the one provides by an optimal solution.

To obtain this pool of feasible solutions, a callback has been implemented within the solution procedure, allowing us to identify and store these solutions. For each incumbent solution, the decision rule is determined, and its AUC-ROC score is calculated for the test sample. Once the optimal solution is obtained, the $gap$ between the optimal and each incumbent solution is calculated, defined as:

\[
gap=\dfrac{Z_{\text{incumbent}} - Z_{\text{optimal}}}{Z_{\text{incumbent}}},
\]
where $Z_{\text{incumbent}}$ and $Z_{\text{optimal}}$ represent the objective values for the incumbent and optimal solutions, respectively. Figure \ref{f:prim_met-vs-gap} illustrates an example of how the accuracy of the classification rule obtained from each incumbent solution varies with respect to the calculated gap.

Tracking the quality of incumbent solutions, measured via AUC-ROC, throughout the branch-and-bound process reveals a relevant insight: although optimality does not necessarily guarantee the best classification performance, reaching the global optimum often results in improved discriminative power for our MSVDD-based decision rule in outlier detection. In particular, the optimal solution frequently achieves a level of separation between regular and anomalous instances that is close to ideal, especially when compared with early, suboptimal solutions.

While a smaller optimality gap does not systematically translate into a higher AUC-ROC, for example, the figure shows a case where a solution with a gap of 0.6 outperforms one with a gap of 0.4, our observations suggest that the most reliable and effective detection rules tend to emerge near or at the global optimum. This is particularly relevant given that the training sample, which determines the optimization problem, and the test sample, used for evaluation, are distinct, and that the evaluation metric (AUC-ROC) differs from the optimization objective.

Overall, these results support the usefulness of exact mathematical optimization techniques in the design of interpretable and effective machine learning models. While not universally superior in all scenarios, optimization-based methods provide a principled and valuable framework for achieving high-quality decision rules.

\begin{figure}[h!]
		\begin{center}
            \includegraphics[scale=0.6]{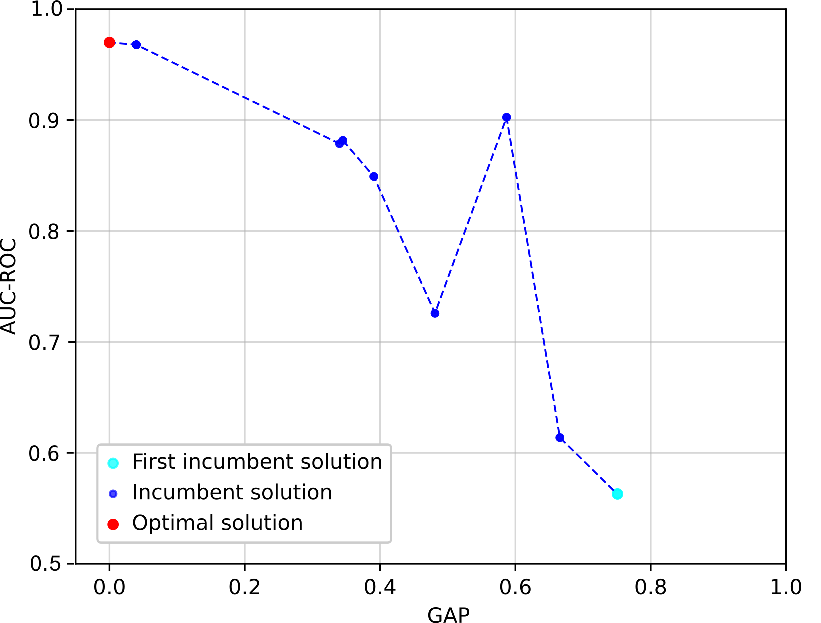}
            \label{f:prim_met-vs-gap_scr_apr}
			\caption{AUC-ROC calculated in every incumbent solution found during the ramification using ($\text{MSVDD}$) primal version ($k=3,\ C=0.8$, \%anom=0.1).
            }
    \label{f:prim_met-vs-gap}
		\end{center}
\end{figure}

 \section{Conclusions and Further Research}\label{sec:conc} 

In this paper, we introduced the Multisphere Support Vector Data Description (MSVDD) problem as a natural extension of the classical Support Vector Data Description (SVDD) model proposed by \cite{tax2004support}, specifically tailored to handle multimodal data distributions. We first developed a primal mathematical optimization formulation and analyzed its structure to derive a dual reformulation. This dual perspective enabled the application of the kernel trick, allowing for the construction of more sophisticated, non-linear decision boundaries that better capture the complex structures often present in real-world datasets.

To the best of our knowledge, this is the first exact optimization-based methodology proposed for the MSVDD problem, in contrast to the heuristic strategies found in the existing literature. Although our method is more computationally intensive, our extensive computational experiments on both synthetic and real datasets show that it significantly improves performance and interpretability, while also providing a sound mathematical foundation for multimodal outlier detection.

Future research  will focus on developing more scalable clustering-based solution techniques to enhance computational efficiency. Although heuristic in nature, these methods could offer quality guarantees in terms of worst-case performance bounds relative to the true optimal value. Furthermore, when prior knowledge about the data distribution is available, the problem may be discretized by providing a predefined set of candidate centers (and radii) for the hyperspheres. The goal would then be to select a subset of these candidates. In this context, techniques from the facility location literature could be leveraged to strengthen the formulation and solve the problem more efficiently. This would also serve as a practical approximation method for the original continuous problem.

 \section*{Acknowledgments}
This work was partially supported by Spanish Ministry of Science and Innovation  (project RED2022-134149-T), Agencia Estatal de Investigaci\'on, Spain and ERDF (projects PID2020-114594GB-C21/C22 and MCIN/AEI/ 10.13039/501100011033) and IMAG-Maria de Maeztu (grant CEX2020-001105-M /AEI /10.13039/501100011033), the European Union NextGenerationEU /PRTR (project TED2021-130875B-I00), and Junta de Andaluc\'ia (project C-EXP-139-UGR23).

%\bibliographystyle{apalike}
%\bibliography{sample}

\end{document}